\documentclass[10pt]{article}
\usepackage[top=1in, bottom=1in, left=1.25in, right=1.25in]{geometry}

\usepackage{graphicx}
\usepackage{epstopdf}
\usepackage{epsfig}
\usepackage{url}
\usepackage{geometry}
\usepackage{color}
\usepackage{amsmath}
\usepackage{amssymb,amsthm}
\usepackage[ruled,vlined,linesnumbered]{algorithm2e}
\usepackage{chngcntr}

\newtheorem{theorem}{Theorem}
\newtheorem{lemma}{Lemma}
\newtheorem{definition}{Definition}

\newcommand{\range}{\text{Range}}
\newcommand{\mat}[1]{\boldsymbol{#1}}
\newcommand{\proj}[1]{\boldsymbol{\mathcal{P}}_{\boldsymbol{#1}}}
\newcommand{\mb}[1]{\mathbb{#1}}
\newcommand{\mc}[1]{\mathcal{#1}}
\newcommand{\normf}[1]{\left\|#1 \right\|_\text{F}}
\newtheorem{remark}{Remark}
\counterwithin{figure}{subsection}

\definecolor{green}{rgb}{0,.5,.5}



\title{A Randomized Tensor Singular Value Decomposition based on the t-product\thanks{This research is based upon work partially supported by the Office of the Director of National Intelligence (ODNI), Intelligence Advanced Research Projects Activity (IARPA), via IARPA’s 2014-14071600011 and by the National Science Foundation under NSF 1319653. The views and conclusions contained herein are those of the authors and should not be interpreted as necessarily representing the official policies or endorsements, either expressed or implied, of ODNI, IARPA, or the U.S. Government. The U.S. Government is authorized to reproduce and distribute reprints for Governmental purpose notwithstanding any copyright annotation thereon.}} 

\author{Jiani Zhang\footnotemark[2]\ 
\and Arvind K. Saibaba\footnotemark[3]
\and Misha E. Kilmer\footnotemark[2]\ 
\and Shuchin Aeron\footnotemark[4]}

\begin{document}
\maketitle

\renewcommand{\thefootnote}{\fnsymbol{footnote}}

\footnotetext[2]{Department of Mathematics, Tufts University, Medford, MA 02155 ({jiani.zhang@tufts.edu, misha.kilmer@tufts.edu}).}
\footnotetext[3]{Department of Mathematics, North Carolina State University, Raleigh, NC 27695 ({asaibab@ncsu.edu}).}
\footnotetext[4]{Department of Electrical and Computer Engineering, Tufts University, Medford, MA 02155 ({shuchin@ece.tufts.edu}).}

\renewcommand{\thefootnote}{\arabic{footnote}}
\newcommand{\arvind}[1]{{\color{red} #1}}
\newcommand{\JZ}[1]{\textcolor{green}{{#1}}}
\newcommand\MEK[1]{\textcolor{blue}{{#1}}}


\begin{abstract}
The tensor Singular Value Decomposition (t-SVD) for third order tensors that was proposed by Kilmer and Martin~\cite{2011kilmer} has been applied successfully in many fields, such as computed tomography, facial recognition, and video completion. In this paper, we propose a method that extends a well-known randomized matrix method to the t-SVD.  This method can produce a factorization with similar properties to the t-SVD, but is more computationally efficient on very large datasets. We present details of the algorithm, theoretical results, and provide numerical results that show the promise of our approach for compressing and analyzing datasets. We also present an improved analysis of the randomized subspace iteration for matrices, which may be of independent interest to the scientific community.  

\end{abstract}

\textbf{Keywords}: truncated SVD, randomized SVD, singular value decomposition, tensor, t-product


\pagestyle{myheadings}
\thispagestyle{plain}
\markboth{J. Zhang, A. K. Saibaba, M. E. Kilmer, S. Aeron}{A Randomized Tensor Singular Value Decomposition}

\section{Introduction} 
In this era of ``big data,'' it is not uncommon for the size of a matrix operator, or a dataset, to reach the scale of petabytes or even exabytes.  By 2013, for example, Facebook was already claiming to use $1.5$ petabytes to store about $10$ billion photos, and Netflix claimed to use $3.14$ petabytes to store available shows and movies~\cite{vance2013netflix}. 
As another example, the size of the matrix operator in quantum chromodynamics is on the order of several millions, or even billions~\cite{frommer2012numerical}.   On the one hand, there still seems to be a push to obtain ever more information by collecting more data since the storage capability exists, and for modeling very fine scale phenomena.  
On the other hand, current data analysis and scientific computing methods are continually challenged by the expanding sizes of the models and datasets.

Almost all of the methods in data analysis and scientific computing rely on matrix algorithms \cite{witten2013randomized}. In particular, the low-rank matrix approximation,
\begin{equation}
	\mathbf{A}_{m \times n} \approx \mathbf{B}_{m \times k} \mathbf{C}_{k \times n},
\end{equation}
where $k < \text{min} \lbrace m,n \rbrace$, is used often, because it allows us to store or analyze the matrix $\mathbf{A}$ by the factor matrices $\mathbf{B}$ and $\mathbf{C}$ instead of the full matrix, which is more efficient when $k \ll \text{min} \lbrace m,n \rbrace$. Moreover, these smaller matrices have often been shown to provide specific structure that help analyze the data with better results, see \cite{berry2007algorithms,chan1994low,van1996schur}.  

It is well known that truncating the matrix singular value decomposition to $k$ terms provides the optimal rank-k approximation to a matrix in both the matrix 2-norm and Frobenius norm, and the algorithms for computing the approximation are numerically robust.   Therefore, it is not surprising that the truncated SVD has been proposed for use in many applications, included but not limited to image processing \cite{sadek2012svd,hansen2006deblurring}, statistics \cite{hastie2009elements,tenorio2001statistical} and Partial Differential Equations (PDEs) \cite{dorobantu1998wavelet}. 
However, the cost of accurately computing the truncated matrix SVD can be prohibitively expensive, making it unsuitable for very large scale applications~\cite{simon2000low}.
 
 Therefore, much work has been devoted to generating low-rank approximations which have similar rank-revealing properties to the SVD but which are cheaper to compute.   As a trade-off, one gives up the optimality property that is 
 the signature feature of the SVD.   
In recent years, much work has been devoted to the development of randomized algorithms for computing low-rank matrix approximations.  They are particularly appealing because although the cannot give the optimal low-rank approximation, they can be shown to give nearly optimal results.   

Randomized algorithms have been recently developed for accurate low-rank representations, see~\cite{frankl1988johnson,ailon2006approximate,dasgupta2003elementary,achlioptas2003database,ailon2009fast,2011halko,halko2011algorithm,clarkson2009} and several others. Randomized matrix methods are powerful because they are numerically robust, computationally efficient, and suitable for implementation on a variety of architectures, including high-performance computing. They usually come in two different flavors -- based on random sampling of columns and rows of the matrix, or by random projection onto a lower dimensional subspace.  More details of these two different approaches can be found in the review paper by~\cite{mahoney2011randomized}.

All the previous work referenced above involves generating near optimal low-rank {\bf matrix} approximations from 
randomized techniques. However, many data sets and matrix operators are inherently multidimensional in nature.   Consider, for example, a collection of hyperspectral images.   One can choose to scan the image at each wavelength as a vector, resulting in a matrix representation of all the hyperspectral data.   But one might also store each 2D image as a slice of a 3-way array, resulting in a third order tensor representation of the data.  As another example, each frame of a color image is technically a 3D array, and so the time sequence of a video can be stored as a 4D array with time as the last index -- the resulting data structure is called a fourth order tensor.    The question of interest to us in this paper is, if we choose to keep the multi-way structure inherent in the data, can we then generalize the concept of a best (in some norm) ``low-rank'' approximation to tensor data, and if so, how do we move from randomized low-rank matrix techniques to randomized tensor factorizations.    

First, one must decide on method of tensor decomposition, and with it, the notion of best ``low-rank''.  The well-known CP decomposition, originally proposed by Hitchcock in 1927 \cite{hitchcock1927tensor}, is a decomposition as a sum of multiway outer products of vectors.  Tensor rank is then defined in terms of the minimal sum of these outer products necessary to construct the tensor \cite{kolda2009tensor}.  The difficulty is a best rank-k approximation need not exist without extra assumptions, and computing the rank-k approximation is also highly non-trivial even when it does exist. The Tucker decomposition \cite{tucker1963implications}, developed by Tucker in 1963, is an alternative decomposition.  For a third-order tensor, the Tucker3 decomposition requires 3 factor matrices and a core tensor.  A Tucker3 factorization always exists, and can be generated such that factor matrices can have orthonormal columns -- the HOSVD \cite{de2000multilinear} is such a decomposition.  One can specify the rank of the factor matrices, and thus obtain a so-called best rank-$(R_1,R_2,R_3)$ Tucker3 approximation.   However, unlike the matrix case, this best approximation cannot be obtained by truncating the full HOSVD.     

A more recent alternative approach to factoring tensors was introduced by Kilmer and Martin in 2011 \cite{2011kilmer}.  In their work, the authors present the concept of a tensor-tensor product with suitable algebraic structure such that classical matrix-like factorizations are possible.   In particular, they give the definition of the tensor SVD (t-SVD) over this new product, and show that truncating that expansion does give a compressed result that is optimal in the Frobenius norm. 

Applications of all three types of these tensor-based decompositions can be found in the literature:  see for example \cite{vasilescu2002multilinear, smilde2005multi,dunlavy2011multilinear,zhang2014novel,hao2014nonnegative,ely20135d,semerci2013tensor}.  Though the decompositions are different, the common theme among the results is evidence that tensor-based decompositions of the data/operators provide considerable improvement over the matrix-based counterparts.  However, all these tensor-based decompositions are deterministic, so it is natural to explore randomization of these tensor decompositions as well.   
 
To the best of our knowledge, the authors in \cite{drineas2007randomized} appear to have pioneered the generalization of random sampling methods to tensors. Specifically, they extended random sampling methods to the Tucker decomposition, and provided a guide to the theoretical analysis for the tensor-based decomposition via random sampling methods. In \cite{tsourakakis2010mach}, the authors provide numerical examples of Tucker decomposition with the random sampling method.   A literature search also reveals attempts to extend the random sampling approach to tensors-based on CP decomposition and Tucker decomposition, \cite{biagioni2015randomized,sigurdson2012randomized}.

In this paper, we extend a well-known matrix-based random projection method, the randomized SVD (r-SVD)~\cite{2011halko}, to third-order tensors through use of the algebra induced by the t-product and the t-SVD~\cite{2011kilmer}. The motivation for focusing our efforts on the randomization of the t-SVD is the theoretical and computational advantages provided by the t-product, as well as the use of the t-SVD in applications.
For example, as mentioned above, a best $k$-term expansion can be obtained from truncation of the t-SVD.  Further, the t-SVD computations are readily parallelizable.  Under the t-product, there are well defined concepts of orthogonality, identity and orthogonal projections, QR factorizations, and the like~\cite{kilmer2013third,gleich2013power,martin2013order}.
Moreover, in some applications, such as compression and facial recognition, the t-SVD has been shown to have superior compression characteristics \cite{hao2013facial} relative to the Tucker decomposition. When they use some storage, the t-SVD has better performance in term of recognition rate.
\paragraph{Contributions} We develop randomized algorithms for low-rank decompositions of tensors, based on the t-product. The first algorithm applies the randomized SVD to the frontal slices of the tensor (in the Fourier domain), where as the second algorithm applies the randomized power method to the same slices. Efficient implementations of the above algorithms are also provided in the spatial, as well as Fourier domains. We develop a framework for error analysis and derive expressions for expected behavior of the error in the low-rank representation, as well as probabilistic bounds for deviation from expectation. Our analysis for randomized power method is novel even for the matrix case. Application to facial recognition, including a parallel implementation, underscores the benefits of the proposed methods.

This paper is organized as follows. In section \ref{preliminaries}, we review some relevant mathematical concepts including the matrix r-SVD, basic definitions and theorems of tensors, and the t-SVD based on the t-product. In section \ref{rt-svd}, we give the basic randomized t-SVD (rt-SVD) method and extend this to the rt-SVD with subspace iteration.  There, we also provide the analysis of error expectations. In section \ref{Numerical}, we compare the errors of our algorithms with theoretical minimal errors on a real dataset, apply the algorithms in the application of facial recognition, and compute them in parallel on a cluster to show the improvement of computation efficiency.   Conclusions and future work are provided in section \ref{conclusion}.   Where noted in the body of the paper, some of the proof details are provided in the Appendix \ref{s_proof}.

\section{Preliminaries}\label{preliminaries}
In this section, we introduce some definitions and algorithms which are used throughout the paper.  We begin by noting that boldface lowercase letters indicate vectors, e.g. $\mathbf{a}$. Boldface uppercase letters indicate matrices, e.g. $\mathbf{A}$.  Boldface Euler script letters indicate tensors, e.g. $\mathcal{A}$, and unless otherwise specified, the tensors are third order.   

We will assume that $\mathbf{A}$ is of rank $r$, and $\mathbf{A} = \sum_{i=1}^{r} \sigma_i \mathbf{u}_i \mathbf{v}_i^{\rm H}$ is the singular value decomposition of $\mathbf{A}$, with $\sigma_1 \ge \sigma_2 \ge \cdots \sigma_r > 0$.

\subsection{The r-SVD}

The randomized Singular Value Decomposition (referred to as r-SVD), was proposed in a series of papers published over the last decade (see e.g.,~\cite{liberty2007randomized,woolfe2008fast}) and popularized by the review paper~\cite{2011halko}.   
The first step in computing the r-SVD is generating several Gaussian random vectors $\mat{W} \in \mb{R}^{n\times(k+p)}$ that are, with high probability, linearly independent.   Here $k$ is the desired target truncation term of the approximation, and $p$ is a non-negative integer oversampling parameter.   
The matrix  $\mat{Y} :=  \mat{AW} \in \mathbb{C}^{m \times (k+p)}$ thus contains random linear combinations of the columns of  $\mat{A}$. 
 
A thin QR of $\mat{Y}$ is computed, so that $\mbox{range}(\mat{Y}) = \mbox{range}(\mathbf{Q})$.   

The idea is if $\mathbf{A}$ has rapidly decaying singular values, so that the dominant part of the range of $\mathbf{A}$ is marked by the first $k$ or so left singular vectors, i.e., $\mathbf{A} \approx \mathbf{Q}\mathbf{Q}^{\rm H}\mathbf{A}$.  

Thus, one computes $\mathbf{B} := \mathbf{Q}^{\rm H}\mathbf{A}$ followed by 
the compact SVD of $\mathbf{B}$, $\mathbf{B} = \tilde{\mathbf{U}} \tilde{\mathbf{S}} \tilde{\mathbf{V}}^{\rm H}$.   The estimated desired singular values of $\mathbf{A}$ are the diagonals of $\tilde{\mathbf{S}}$, while $\mathbf{Q} {\tilde{\mathbf{U}}}$ gives the estimated right singular vectors of $\mathbf{A}$.  
Algorithm $\ref{alg_rSVD}$ summarizes the procedure described above. 

\begin{algorithm}[H]\label{alg_rSVD}
\SetAlgoLined
\SetKwInOut{Input}{Input}\SetKwInOut{Output}{Output}
\Input{$\mathbf{A} \in \mathbb{C}^{m \times n}$, target truncation term $k$, and oversampling parameter $p$}
\Output{$\mathbf{U}_{k} \in \mathbb{C}^{n \times k}$, $\mathbf{S}_{k} \in \mathbb{C}^{k \times k}$, and $\mathbf{V}_{k} \in \mathbb{C}^{n \times k}$}
\BlankLine
  Generate a Gaussian random matrix $\mathbf{W} \in \mathbb{R}^{n \times (k+p)}$\;
  Form a matrix $\mathbf{Y} = \mathbf{A} \mathbf{W}$\;
  Construct matrix $\mathbf{Q} \in \mathbb{C}^{n \times (k+p)}$ which is orthogonal column basis for $\mathbf{Y}$\;
  Form $\mathbf{B} \in \mathbb{C}^{(k+p) \times n}$, $\mathbf{B} = \mathbf{Q}^{*} \mathbf{A}$\;
  Compute $\mathbf{B} = \tilde{\mathbf{U}} \tilde{\mathbf{S}} \tilde{\mathbf{V}}^{\rm H}$,\;
  Set $\mathbf{U} = \tilde{\mathbf{U}}( :,1\!:\!k)$, $\mathbf{S}_{k} = \tilde{\mathbf{S}}(1\!:\!k,1\!:\!k)$, $\mathbf{V}_{k} = \tilde{\mathbf{V}}(:,1\!:\!k)$\;
  Form $\mathbf{U}_{k} = \mathbf{Q}_{k} \mathbf{U}$.
\caption{r-SVD method~\cite{2011halko}}
\end{algorithm}

When $\mathbf{A}$ is dense and of size $n \times n$, this algorithm can take $\mc{O}(kn^{2})$ flops. For more details, see~\cite{woolfe2008fast}. The expected error in the low-rank approximation measured using the Frobenius norm can be bounded, as the result below shows. This result was first proved in~\cite[Theorem 10.5]{2011halko}, but is stated here in a slightly different form. The original result derived the error bound for $\mathbb{E} \normf{\mathbf{A}-\mathbf{Q}\mathbf{Q}^{\rm H}\mathbf{A}}$, whereas, in the next section we require the error bound for $\mathbb{E} \normf{\mathbf{A}-\mathbf{Q}\mathbf{Q}^{\rm H}\mathbf{A}}^2$.  

\begin{theorem} Given a matrix $\mathbf{A}\in \mathbb{C}^{m \times n}$ and a Gaussian random matrix $\mathbf{W}\in \mathbb{R}^{n \times (k+p)}$, let $p \geq 2$ be a pre-specified integer. Suppose that $\mat{Q}$ is computed as in Algorithm~\ref{alg_rSVD}, then the expected approximation error is 
\begin{equation}\label{expected_error}
\mathbb{E} \normf{\mathbf{A}-\mathbf{Q}\mathbf{Q}^{\rm H}\mathbf{A}}^2 \> \leq \>  \left(1+\dfrac{k}{p-1}\right)\left(\sum_{j=k+1}^{\min\{m,n\}} \sigma^{2}_{j}\right)
\end{equation}
where $\sigma_{j}$ is the $j^{th}$ singular value of $\mathbf{A}$.
\end{theorem}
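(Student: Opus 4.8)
The plan is to reduce the probabilistic statement to a deterministic bound that is driven by the action of the Gaussian test matrix $\mathbf{W}$ on the singular subspaces of $\mathbf{A}$, and then to average that bound using the distributional properties of Gaussians. First I would fix the full SVD $\mathbf{A} = \mathbf{U}\boldsymbol{\Sigma}\mathbf{V}^{\rm H}$ and split the spectrum at the target rank, writing $\boldsymbol{\Sigma} = \mathrm{diag}(\boldsymbol{\Sigma}_1,\boldsymbol{\Sigma}_2)$ with $\boldsymbol{\Sigma}_1$ holding $\sigma_1,\dots,\sigma_k$ and $\boldsymbol{\Sigma}_2$ the trailing singular values, and partitioning $\mathbf{V} = [\,\mathbf{V}_1\ \mathbf{V}_2\,]$ conformally. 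Since $\mathbf{Q}$ is an orthonormal basis for $\mathrm{range}(\mathbf{A}\mathbf{W})$, the operator $\mathbf{Q}\mathbf{Q}^{\rm H}$ is exactly the orthogonal projector onto that range, which is the object I must control. I would assume $\sigma_k > 0$ (otherwise the claim is immediate), so that $\boldsymbol{\Sigma}_1$ is invertible.

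The key deterministic step is the inequality
\[
\normf{\mathbf{A}-\mathbf{Q}\mathbf{Q}^{\rm H}\mathbf{A}}^2 \;\leq\; \normf{\boldsymbol{\Sigma}_2}^2 + \normf{\boldsymbol{\Sigma}_2\mathbf{W}_2\mathbf{W}_1^{\dagger}}^2,
\]
where $\mathbf{W}_1 := \mathbf{V}_1^{\rm H}\mathbf{W}$, $\mathbf{W}_2 := \mathbf{V}_2^{\rm H}\mathbf{W}$, and $\dagger$ denotes the Moore--Penrose pseudoinverse, valid whenever $\mathbf{W}_1$ has full row rank. I would establish this by exhibiting a matrix lying in $\mathrm{range}(\mathbf{A}\mathbf{W})$ whose projector is dominated by $\mathbf{Q}\mathbf{Q}^{\rm H}$: post-multiplying $\mathbf{A}\mathbf{W} = \mathbf{U}\boldsymbol{\Sigma}\mathbf{V}^{\rm H}\mathbf{W}$ by $(\boldsymbol{\Sigma}_1\mathbf{W}_1)^{\dagger} = \mathbf{W}_1^{\dagger}\boldsymbol{\Sigma}_1^{-1}$ flattens the leading block to the identity while leaving the trailing block equal to $\boldsymbol{\Sigma}_2\mathbf{W}_2\mathbf{W}_1^{\dagger}\boldsymbol{\Sigma}_1^{-1}$. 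Because the residual of projecting onto a larger range is no bigger, and because the Frobenius norm is unitarily invariant under $\mathbf{U}$, the error is controlled by the off-diagonal block. This is the part requiring the most care, and it is where I expect the main obstacle to lie, since it rests on a clean manipulation of the projector monotonicity together with the block structure induced by the SVD.

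With the deterministic inequality in hand, the averaging is comparatively routine. By the rotational invariance of the Gaussian distribution and the unitarity of $\mathbf{V}$, the blocks $\mathbf{W}_1 \in \mathbb{R}^{k\times(k+p)}$ and $\mathbf{W}_2$ are independent matrices with i.i.d.\ standard Gaussian entries. I would first condition on $\mathbf{W}_1$ and average over $\mathbf{W}_2$ using the identity $\mathbb{E}\normf{\mathbf{S}\mathbf{G}\mathbf{T}}^2 = \normf{\mathbf{S}}^2\normf{\mathbf{T}}^2$ for a standard Gaussian $\mathbf{G}$ and fixed $\mathbf{S},\mathbf{T}$, which gives $\mathbb{E}_{\mathbf{W}_2}\normf{\boldsymbol{\Sigma}_2\mathbf{W}_2\mathbf{W}_1^{\dagger}}^2 = \normf{\boldsymbol{\Sigma}_2}^2\,\normf{\mathbf{W}_1^{\dagger}}^2$. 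I would then take the outer expectation over $\mathbf{W}_1$ and invoke the known value $\mathbb{E}\normf{\mathbf{W}_1^{\dagger}}^2 = k/(p-1)$ for a $k\times(k+p)$ Gaussian matrix, where the hypothesis $p\ge 2$ is precisely what guarantees finiteness.

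Combining these yields $\mathbb{E}\normf{\mathbf{A}-\mathbf{Q}\mathbf{Q}^{\rm H}\mathbf{A}}^2 \leq \normf{\boldsymbol{\Sigma}_2}^2\bigl(1 + k/(p-1)\bigr)$, and identifying $\normf{\boldsymbol{\Sigma}_2}^2 = \sum_{j=k+1}^{\min\{m,n\}}\sigma_j^2$ gives the claim. I would emphasize that working directly with the squared norm is what keeps the argument clean: the original statement in~\cite{2011halko} bounds the unsquared expectation and must absorb an extra H\"older/Jensen step that introduces a square root on the right-hand side, whereas here linearity of expectation applies to the squared residual without loss, producing exactly the form needed in the subsequent tensor analysis.
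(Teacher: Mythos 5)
Your proof is correct and takes essentially the same route as the paper: the paper's proof of this theorem is a direct citation of \cite[Theorem 10.5]{2011halko}, and your argument — the deterministic bound $\normf{\mathbf{A}-\mathbf{Q}\mathbf{Q}^{\rm H}\mathbf{A}}^2 \leq \normf{\mat{\Sigma}_2}^2 + \normf{\mat{\Sigma}_2\mat{W}_2\mat{W}_1^{\dagger}}^2$ followed by Gaussian averaging via $\mathbb{E}\normf{\mat{\Sigma}_2\mat{W}_2\mat{W}_1^{\dagger}}^2 = \normf{\mat{\Sigma}_2}^2\,\mathbb{E}\normf{\mat{W}_1^{\dagger}}^2$ and $\mathbb{E}\normf{\mat{W}_1^{\dagger}}^2 = k/(p-1)$ — is precisely the proof of that cited result (its Theorem 9.1 plus Propositions 10.1--10.2), stopped at the squared-norm stage before the H\"older step, which is exactly the form the paper needs.
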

\begin{proof}
The proof follows readily from~\cite[Theorem 10.5]{2011halko}.
\end{proof}

From the inequality \eqref{expected_error}, the value of $\mathbb{E} \| \mathbf{A}-\mathbf{Q}\mathbf{Q}^{\rm H}\mathbf{A} \|_{\rm F}^2$ depends on $\sum_{j>k} \sigma^{2}_{j}$. When the singular values of $\mathbf{A}$ decay gradually, $\sum_{j>k} \sigma^{2}_{j}$ can be large, and therefore the low-rank approximation as computed above may not be sufficiently accurate. In this situation, Algorithm~\ref{alg_rSVD_power_subspace}, which is based on subspace iteration, may be preferred.

\begin{algorithm}[H]\label{alg_rSVD_power_subspace}
\SetAlgoLined
\SetKwInOut{Input}{Input}\SetKwInOut{Output}{Output}
\Input{$\mathbf{A} \in \mathbb{C}^{m \times n}$, target truncation term $k$, a parameter $q$, and an oversampling parameter $p$}
\Output{An orthogonal column basis $\mathbf{Q}$ of $\mathbf{Y}$}
\BlankLine
 Generate a Gaussian random matrix $\mathbf{W} \in \mathbb{R}^{n \times (k+p)}$\;
 Form a matrix $\mathbf{Y_{0}} = \mathbf{A} \mathbf{W}$ and compute the QR factorization of $\mathbf{Y}_{0}=\mathbf{Q}_{0}\mathbf{R}_{0}$\;

 \For{$i\leftarrow 1$ \KwTo $q$}{
      Form $\mathbf{\tilde{Y}}_{i} = \mathbf{A}^{\rm H} \mathbf{Q}_{i-1}$ and compute the QR factorization of $\mathbf{\tilde{Y}}_{i}=\mathbf{\tilde{Q}}_{i}\mathbf{\tilde{R}}_{i}$\;
      Form $\mathbf{Y}_{i} = \mathbf{A} \mathbf{\tilde{Q}}_{i}$ and compute the QR factorization of $\mathbf{Y}_{i}=\mathbf{Q}_{i}\mathbf{R}_{i}$\; 
   }
Form a matrix $\mathbf{Q}=\mathbf{Q}_{q}$\;
\caption{r-SVD method with subspace iteration~\cite{2011halko}}
\end{algorithm}

The error analysis for Algorithm~\ref{alg_rSVD_power_subspace} is developed in~\cite{2011halko} for the spectral norm, but no analysis was presented in the Frobenius norm. We present the following result that characterizes the error due to Algorithm~\ref{alg_rSVD_power_subspace} in the Frobenius norm. We assume that $k$ is the target truncation term, and define the singular value gap $\tau_k  \equiv \frac{\sigma_{k+1}}{\sigma_{k}}$. We further assume that $\tau_k \ll 1$, i.e., there is a gap between singular values $k$ and $k+1$.

\begin{theorem}[Average Frobenius error for Algorithm~\ref{alg_rSVD_power_subspace}]\label{t_gu_subspace}
Let $\mathbf{A}\in\mb{C}^{m\times n}$ and let $\mathbf{W} \in \mb{R}^{n\times (k+p)}$ be a Gaussian random matrix with $p \geq 2$ being the oversampling parameter. Suppose $\mathbf{Q}$ is obtained from Algorithm~\ref{alg_rSVD_power_subspace}, then
\begin{equation*}
\mathbb{E}\normf{ \mathbf{A}-\mathbf{Q}\mathbf{Q}^{\rm H}  \mathbf{A}}^2 \> \leq
\> \left(1 + \frac{k}{p-1}\tau_k^{4q} \right) \left(\sum_{j = k+1}^{\min\{m,n\}} \sigma_j^2\right) ,
\end{equation*}
where $k$ is a target truncation term, $q$ is the number of iterations, $\sigma_{j}$ is the $j^{th}$ singular value of $\mathbf{A}$, and $\tau_{k}= \sigma_{k+1}/\sigma_k \ll 1$ is the singular value gap. 
\end{theorem}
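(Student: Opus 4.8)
The plan is to follow the Halko--Martinsson--Tropp template that underlies the basic r-SVD bound, while exploiting the fact that subspace iteration implicitly applies a power of $\mathbf{A}$. First I would note that the QR factorizations in Algorithm~\ref{alg_rSVD_power_subspace} never alter a column space, so a short induction on the loop index gives $\range(\mathbf{Q}) = \range(\mathbf{B}\mathbf{W})$ with $\mathbf{B} := (\mathbf{A}\mathbf{A}^{\rm H})^{q}\mathbf{A}$. Writing the SVD $\mathbf{A} = \mathbf{U}\mathbf{\Sigma}\mathbf{V}^{\rm H}$ and partitioning $\mathbf{\Sigma} = \mathrm{diag}(\mathbf{\Sigma}_1,\mathbf{\Sigma}_2)$ with $\mathbf{\Sigma}_1$ holding $\sigma_1,\dots,\sigma_k$, the matrix $\mathbf{B}$ shares the singular vectors of $\mathbf{A}$ and has singular values $\sigma_j^{2q+1}$, so that $\mathbf{B}\mathbf{W} = \mathbf{U}\mathbf{\Sigma}^{2q+1}\mathbf{\Omega}$ with $\mathbf{\Omega} := \mathbf{V}^{\rm H}\mathbf{W}$. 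Because $\mathbf{V}$ is unitary and $\mathbf{W}$ is Gaussian, the blocks $\mathbf{\Omega}_1 := \mathbf{V}_1^{\rm H}\mathbf{W}$ and $\mathbf{\Omega}_2 := \mathbf{V}_2^{\rm H}\mathbf{W}$ are independent Gaussian matrices, and $\mathbf{\Omega}_1$ has full row rank almost surely.

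The crux is a \emph{deterministic} bound on $\normf{\mathbf{A}-\mathbf{Q}\mathbf{Q}^{\rm H}\mathbf{A}}^2$ in which the projector is built from $\mathbf{B}$ but the residual is still measured against $\mathbf{A}$. Setting $\mathbf{P} := \mathbf{Q}\mathbf{Q}^{\rm H}$, I would observe that $\mathbf{P}$ fixes $\range(\mathbf{B}\mathbf{W}\mathbf{\Omega}_1^{\dagger})$; using $\mathbf{\Omega}_1\mathbf{\Omega}_1^{\dagger} = \mathbf{I}$ (full row rank) and the invertibility of $\mathbf{\Sigma}_1^{2q+1}$, this subspace equals the range of $\mathbf{U}\left[\begin{smallmatrix}\mathbf{I}\\\mathbf{F}\end{smallmatrix}\right]$ with $\mathbf{F} := \mathbf{\Sigma}_2^{2q+1}\mathbf{\Omega}_2\mathbf{\Omega}_1^{\dagger}\mathbf{\Sigma}_1^{-(2q+1)}$. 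Projecting onto a subspace of $\range(\mathbf{Q})$ only increases the residual, so after passing to the unitarily equivalent problem in the $\mathbf{U}$ coordinates and invoking the standard block-projector estimate $\normf{(\mathbf{I}-\mathbf{P}'')\mathbf{\Sigma}}^2 \le \normf{\mathbf{\Sigma}_2}^2 + \normf{\mathbf{F}\mathbf{\Sigma}_1}^2$, I arrive at
\[
\normf{\mathbf{A}-\mathbf{Q}\mathbf{Q}^{\rm H}\mathbf{A}}^2 \;\le\; \normf{\mathbf{\Sigma}_2}^2 + \normf{\mathbf{\Sigma}_2^{2q+1}\mathbf{\Omega}_2\mathbf{\Omega}_1^{\dagger}\mathbf{\Sigma}_1^{-2q}}^2 .
\]

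The next step extracts the gap factor. Factoring $\mathbf{\Sigma}_2^{2q+1} = \mathbf{\Sigma}_2^{2q}\mathbf{\Sigma}_2$ and peeling off the diagonal factors on both sides via $\normf{\mathbf{D}_1\mathbf{X}\mathbf{D}_2} \le \|\mathbf{D}_1\|_2\normf{\mathbf{X}}\|\mathbf{D}_2\|_2$ with $\|\mathbf{\Sigma}_2^{2q}\|_2 = \sigma_{k+1}^{2q}$ and $\|\mathbf{\Sigma}_1^{-2q}\|_2 = \sigma_k^{-2q}$ yields $\normf{\mathbf{\Sigma}_2^{2q+1}\mathbf{\Omega}_2\mathbf{\Omega}_1^{\dagger}\mathbf{\Sigma}_1^{-2q}}^2 \le \tau_k^{4q}\normf{\mathbf{\Sigma}_2\mathbf{\Omega}_2\mathbf{\Omega}_1^{\dagger}}^2$, which reduces everything to the quantity already present in the basic analysis. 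Finally I take expectations: conditioning on $\mathbf{\Omega}_1$ and using the Gaussian identity $\mathbb{E}\normf{\mathbf{S}\mathbf{G}\mathbf{T}}^2 = \normf{\mathbf{S}}^2\normf{\mathbf{T}}^2$ gives $\mathbb{E}_{\mathbf{\Omega}_2}\normf{\mathbf{\Sigma}_2\mathbf{\Omega}_2\mathbf{\Omega}_1^{\dagger}}^2 = \normf{\mathbf{\Sigma}_2}^2\normf{\mathbf{\Omega}_1^{\dagger}}^2$, and the expected inverse-Wishart bound $\mathbb{E}\normf{\mathbf{\Omega}_1^{\dagger}}^2 = k/(p-1)$ (valid for $p \ge 2$) closes the argument, since $\normf{\mathbf{\Sigma}_2}^2 = \sum_{j>k}\sigma_j^2$.

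The main obstacle is the deterministic estimate of the second paragraph: one must arrange the flattening so that the oversampled projector formed from $\mathbf{B}$ is reduced to a $k$-dimensional subspace of $\range(\mathbf{Q})$ while the residual continues to be taken against $\mathbf{A}$, and one must carry along the weight $\mathbf{\Sigma}_1^{-2q}$ that survives in $\mathbf{F}\mathbf{\Sigma}_1$. Once this estimate is in hand, the scalar extraction of $\tau_k^{4q}$ and the Gaussian expectation are routine.
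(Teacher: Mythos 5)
Your proposal is correct and follows essentially the same route as the paper: you reduce to the deterministic bound $\normf{(\mat{I}-\mat{QQ}^{\rm H})\mat{A}}^2 \leq \normf{\mat{\Sigma}_2}^2 + \tau_k^{4q}\normf{\mat{\Sigma}_2\mat{W}_2\mat{W}_1^\dagger}^2$ via the same $\mat{F} = \mat{\Sigma}_2^{2q+1}\mat{W}_2\mat{W}_1^\dagger\mat{\Sigma}_1^{-(2q+1)}$ construction and block-projector estimate (the paper's Theorem~\ref{t_struct}), and then close with the identical Gaussian expectation argument (conditioning on $\mat{W}_1$, the identity $\mathbb{E}\normf{\mat{S}\mat{G}\mat{T}}^2 = \normf{\mat{S}}^2\normf{\mat{T}}^2$, and $\mathbb{E}\normf{\mat{W}_1^\dagger}^2 = k/(p-1)$), which is precisely the paper's invocation of the arguments of~\cite[Theorem 10.5]{2011halko}.
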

\begin{proof}
See Appendix~\ref{s_proof}. 
\end{proof}

The error due to the randomized subspace iteration is similar to Theorem~\ref{expected_error}, except for the term $\tau_k^{4q}$. As the number of subspace iterations $q$ increases, the effect of the residual term $k\tau_k^{4q}/(p-1)$ decreases, and the subspace iteration achieves the optimal error of the SVD. Also note that for $q = 0$, we exactly obtain the result in Theorem~\ref{expected_error}. A similar result was presented in~\cite[Theorem 5.7]{gu2015subspace}, but our analysis is sharper, see discussion in Remark~\ref{r_gucomp}.

\subsection{Tensors}\label{definitions}
A tensor is a multi-dimensional array, and the order of the tensor is the number of dimensions of this array. In this paper, we focus on the third order tensor $\mathcal{A} \in \mathbb{R}^{n_{1} \times n_{2} \times n_{3}}$. 
Each entry of the tensor $\mathcal{A}$ is denoted by Matlab indexing notation, i.e., $\mathcal{A}(i,j,k)$. 

A fiber of tensor $\mathcal{A}$ is a one-dimensional array defined by fixing two indices. $\mathcal{A}(:,j,k)$ is the $(j,k)^{th}$ column fiber, $\mathcal{A}(i,:,k)$ is the $(i,k)^{th}$ row fiber, and $\mathcal{A}(i,j,:)$ is the $(j,k)^{th}$ tube fiber. A slice of tensor $\mathcal{A}$ is a two-dimensional array defined by fixing one index. $\mathcal{A}(i,:,:)$ is the $i^{th}$ horizontal slice, $\mathcal{A}(:,j,:)$ is the $j^{th}$ lateral slice, and $\mathcal{A}(:,:,k)$ is the $k^{th}$ frontal slice. For convenience, $\mathcal{A}(:,:,k)$ is written as $\mathcal{A}^{(k)}$.  A third order tensor $\mathcal{A}$ can be seen as an $n_{1} \times n_{2}$ array of tube fibers, each of size $1 \times 1 \times n_{3}$. The t-product of two tube fibers is defined as their the circular convolution, so the t-product between two tensors can be defined as in Definition~\ref{def_t-product}. 
Next, we review several definitions from~\cite{2011kilmer} that will be necessary for the rest of this paper.

\begin{figure*}[h]\label{tensors}
\begin{center}
\includegraphics[scale=.35]{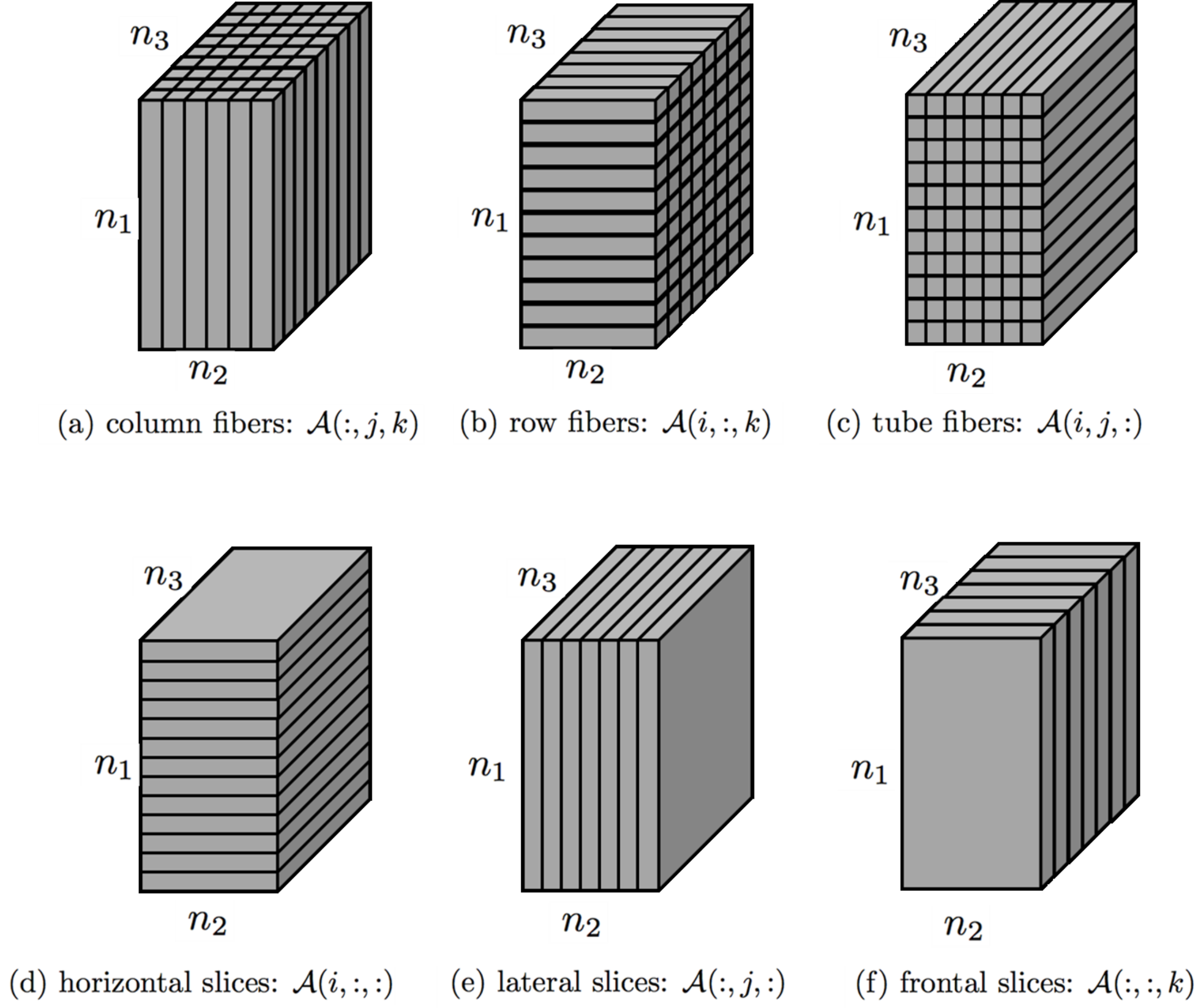}
\end{center}
\caption{Fibers and slices of an $n_{1} \times n_{2} \times n_{3}$ tensor $\mathcal{A}$}.
\end{figure*}

\begin{definition}[t-product]\label{def_t-product}
Let $\mathcal{A}$ be an $n_{1} \times n_{2} \times n_{3}$ tensor and $\mathcal{B}$ be an $n_{2} \times n_{4} \times n_{3}$ tensor. The t-product of $\mathcal{A}$ and $\mathcal{B}$, $\mathcal{C} = \mathcal{A} \ast \mathcal{B}$, is an $n_{1} \times n_{4} \times n_{3}$ tensor
\begin{equation*}
	\mathcal{C}(i,j,:)  = \sum_{k =1}^{n_{2}}\mathcal{A}(i,k,:) \ast \mathcal{B}(k,j,:) = \sum_{k =1}^{n_{2}}\mathcal{A}(i,k,:) \circ \mathcal{B}(k,j,:)       
	\end{equation*}                                                  where the notation $\circ$ denotes the circular convolution.
\end{definition}

Because the circular convolution of two tube fibers can be computed by discrete Fourier transform, the t-product can be alternatively computed in the Fourier domain\footnotemark, as shown in Algorithm \ref{alg_t-product}. 
\footnotetext{All quantities in the spatial domain are real and all quantities in the Fourier domain may be complex.}
\begin{algorithm}[H]\label{alg_t-product}
\SetAlgoLined
\SetKwInOut{Input}{Input}\SetKwInOut{Output}{Output}
\Input{$\mathcal{A} \in \mathbb{R}^{n_{1} \times n_{2} \times n_{3}}$ and $\mathcal{B} \in \mathbb{R}^{n_{2} \times n_{4} \times n_{3}}$ }
\Output{an $n_{1} \times n_{4} \times n_{3}$ tensor $\mathcal{C}$, $\mathcal{C} = \mathcal{A} \ast \mathcal{B}$}
\BlankLine
 $\hat{\mathcal{A}} \leftarrow \tt{fft}(\mathcal{A},[\,],3)$\;
 $\hat{\mathcal{B}} \leftarrow \tt{fft}(\mathcal{B},[\,],3)$\;
\For{$i\leftarrow 1$ \KwTo $n_{3}$}{
  $\hat{\mathcal{C}}^{(i)} = \hat{\mathcal{A}}^{(i)}\hat{\mathcal{B}}^{(i)}$ \;
   }
 $\mathcal{C} \leftarrow \tt{ifft}(\hat{\mathcal{C}},[\,],3)$  
\caption{t-product computation in Fourier domain~\cite{2011kilmer}}
\end{algorithm}

\begin{definition}[Identity tensor]
	The $n_{1} \times n_{2} \times n_{3}$ identity tensor $\mathcal{I}$ is the tensor whose first frontal slice is the $n_{1} \times n_{2}$ identity matrix, and whose other frontal slices are all zeros.  
\end{definition}

\begin{definition}[Transpose]
	If $\mathcal{A}$ is an $n_{1} \times n_{2} \times n_{3}$ tensor, then $\mathcal{A}^{\rm T}$ is an $n_{2} \times n_{1} \times n_{3}$ tensor obtained by transposing each of the frontal slices and then reversing the order of transposed frontal slices $2$ through $n_{3}$, see Fig. \ref{transpose}.   
\end{definition}

\begin{figure*}[h]
\begin{center}
\includegraphics[scale=.35]{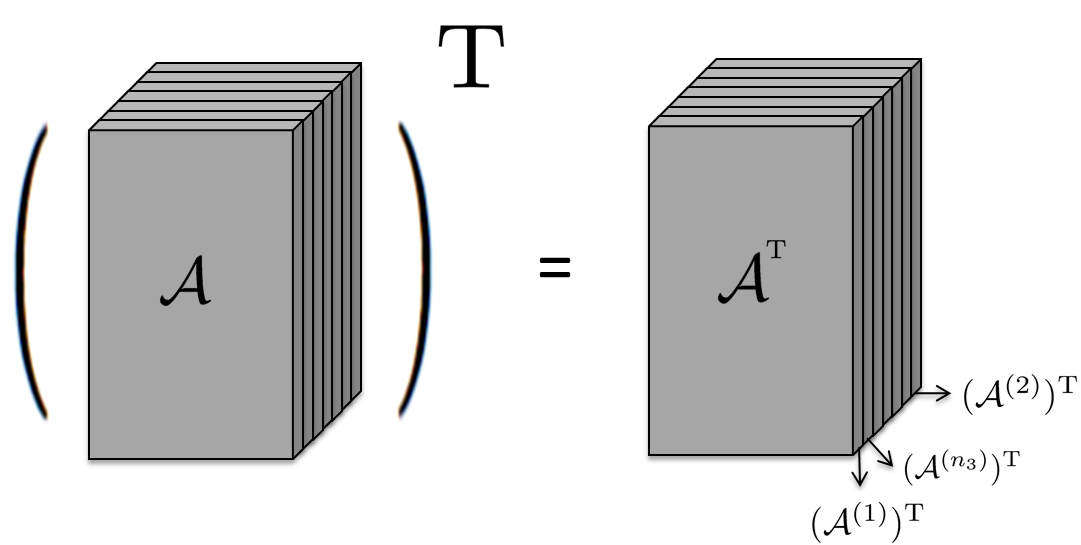}
\end{center}
\caption{Transpose of an $n_{1} \times n_{2} \times n_{3}$ tensor $\mathcal{A}$}.\label{transpose}
\end{figure*}

\begin{definition}[Orthogonality]
	An $n_{1} \times n_{2} \times n_{3}$ tensor $\mathcal{A}$ is called orthogonal if $n_1 = n_2$, if the t-product of $\mathcal{A}^{\rm T}$ and $\mathcal{A}$ is equal to the identity tensor, i.e.,
	\begin{equation*}
		\mathcal{A}^{\rm T} \ast \mathcal{A} = \mathcal{I}_{n_2n_2n_3}.
	\end{equation*}
If the above equation holds but $n_1 > n_2$, then the tensor is said to be partially orthogonal.
\end{definition}

\begin{definition}[f-Diagonal]
	An $n_{1} \times n_{2} \times n_{3}$ tensor $\mathcal{A}$ is called f-diagonal, if each frontal face of $\mathcal{A}$ is diagonal.
\end{definition}

\begin{definition}[t-QR factorization]
	Given an  $n_{1} \times n_{2} \times n_{3}$ tensor $\mathbf{A}$, the t-QR factorization of $\mathcal{A}$ is
	\begin{equation*}
		\mathcal{A} =\mathcal{Q} \ast \mathcal{R}
	\end{equation*}
	where $\mathcal{Q}$ is partially orthogonal.
\end{definition}

We also introduce for the first time the notion of Gaussian random tensors. Our definition for Gaussian random tensors is motivated by the need to generate as few random numbers, or samples, as possible, while still being able to use the conclusions of literature on Gaussian random matrices. 

\begin{definition}[Gaussian random tensor]
	An $n_{1} \times n_{2} \times n_{3}$ tensor $\mathcal{W}$ is called a Gaussian random tensor, if the elements of $\mathcal{W}^{(1)}$ satisfy the standard normal distribution, and other frontal slices are all zeros. 
\end{definition}

The Fourier transform of $\mathcal{W}$ along the $3^\text{rd}$ dimension is denoted as $\hat{\mathcal{W}}$ such that every frontal slice is an identical copy of the first slice $\mathcal{W}^{(1)}$, see Figure~\ref{Omega}. 

\begin{figure}[h]\label{Omega}
\centering
\includegraphics[scale = .35]{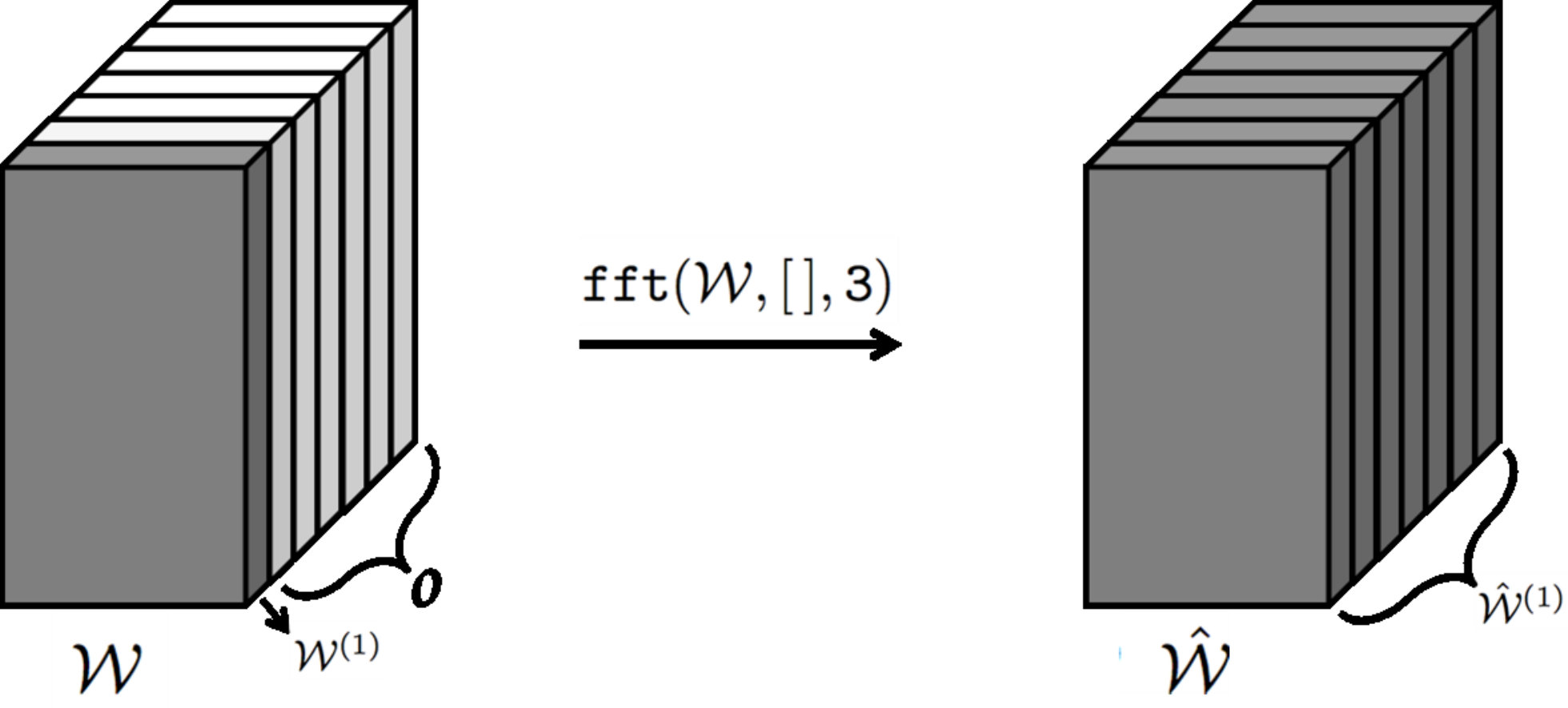}
\caption{A third order Gaussian random tensor and its Fourier transform.}
\end{figure}

\subsection{t-SVD}

We now present the t-SVD and truncated t-SVD, which builds on the operations of tensors introduced in Section~\ref{definitions}. These were first developed in~\cite{2011kilmer}.

\begin{definition}\cite{2011kilmer}
	Let $\mathcal{A}$ be an $n_{1} \times n_{2} \times n_{3}$ tensor. The t-SVD of $\mathcal{A}$ is
\begin{equation*}
\mathcal{A} = \mathcal{U} \ast \mathcal{S} \ast \mathcal{V}^{\rm T}
\end{equation*}                                                                 
where $\mathcal{U} \in \mathbb{R}^{n_{1} \times n_{1} \times n_{3}}$, $\mathcal{V} \in \mathbb{R}^{n_{2} \times n_{2} \times n_{3}}$ are partially orthogonal tensors, and $\mathcal{S}_{k} \in \mathbb{R}^{n_{1} \times n_{2} \times n_{3}}$ is a f-diagonal tensor.
\end{definition}

\begin{figure}[h]\label{tSVD}
\centering
\includegraphics[scale=.5]{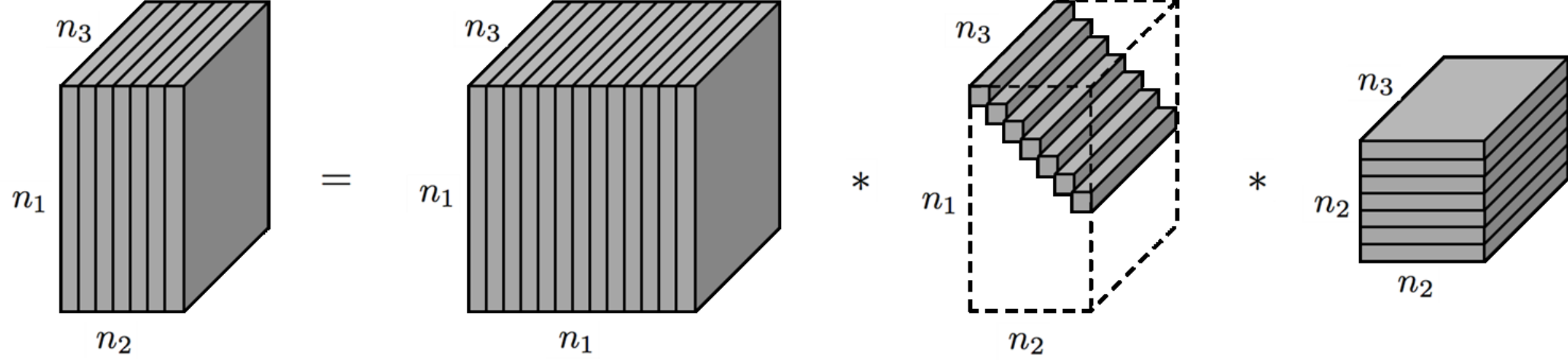}
\caption{The t-SVD of an $n_{1} \times n_{2} \times n_{3}$ tensor $\mathcal{A}$.}
\end{figure}

\begin{definition}\cite{2011kilmer}
Given a tensor $\mathcal{A} \in \mathbb{R}^{n_{1} \times n_{2} \times n_{3}}$, define the truncated t-SVD of $\mathcal{A}$ as
\begin{equation*}
\mathcal{A}_{k} = \mathcal{U}_{k} \ast \mathcal{S}_{k} \ast \mathcal{V}_{k}^{\rm T}
\end{equation*}                                                                 where $k$ is a target truncation term, $\mathcal{U}_{k} \in \mathbb{R}^{n_{1} \times k \times n_{3}}$, $\mathcal{V}_{k} \in \mathbb{R}^{n_{2} \times k \times n_{3}}$ are partially orthogonal, and $\mathcal{S}_{k} \in \mathbb{R}^{k \times k \times n_{3}}$ is a f-diagonal tensor. 
\end{definition}

The optimality of the error in the truncated t-SVD is presented below, which is a generalization of the well-known result of optimality of the truncated SVD~\cite[Section 7.4.2]{horn2012matrix}. 
Note, however, that truncation of the t-SVD to $k$ terms is not the same as computing a rank-k tensor approximation (i.e., a tensor approximation as the sum of $k$ outer products of vectors), nor is it equivalent to
a best rank-$(R_1,R_2,R_3)$ Tucker3 approximation.   What we show is that truncating the t-SVD gives the best ``tubal-rank k'' approximation~\cite{kilmer2013third}.  

Here, and henceforth, we will use the short-hand notation $\sum_{j>k}$ to represent $\sum_{j=k+1}^{\min\{n_1,n_2\}}$ for clarity. 
\begin{theorem}\label{theorem_theoretical_error}\cite{2011kilmer}
Given a tensor $\mathcal{A} \in \mathbb{R}^{n_{1} \times n_{2} \times n_{3}}$, $ \mathcal{A}_{k} = \operatorname{arg\,min}_{\tilde{\mathcal{A}} \in \mathcal{M}} \| \mathcal{A} - \tilde{\mathcal{A}} \|_{\rm F}$, where $\mathcal{M} = \lbrace \mathcal{C} = \mathcal{X} \ast \mathcal{Y} \mid \mathcal{X} \in \mathbb{R}^{n_{1} \times k \times n_{3}}, \mathcal{Y} \in \mathbb{R}^{k \times n_{2} \times n_{3}}\rbrace$. Therefore,  $\| \mathcal{A} - \mathcal{A}_{k} \|_{\rm F}$ is the theoretical minimal error, given by
\begin{equation} \label{e_ttsvd_error}
\| \mathcal{A} - \mathcal{A}_{k} \|_{\rm F} \> = \> \left(\frac{1}{n_3} \sum_{i=1}^{n_3}\sum_{j>k} (\hat{\sigma}_j^{(i)})^2\right)^{1/2} ,
\end{equation}
where $\hat{\sigma}_j^{(i)} \equiv \hat{\mc{S}}(j,j,i)$ is the $j^{th}$ singular value corresponding to the $i^{th}$ frontal face (in the Fourier domain).  
\end{theorem}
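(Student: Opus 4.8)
The plan is to exploit the fact that the t-product block-diagonalizes under the discrete Fourier transform along the third mode, reducing the tensor optimization to $n_3$ decoupled matrix problems. First I would record a Parseval-type identity: since applying the DFT matrix $\mat{F}_{n_3}$ to each tube fiber satisfies $\mat{F}_{n_3}^{\rm H}\mat{F}_{n_3} = n_3 \mat{I}$, one has $\normf{\mc{A}}^2 = \frac{1}{n_3}\normf{\hat{\mc{A}}}^2 = \frac{1}{n_3}\sum_{i=1}^{n_3}\normf{\hat{\mc{A}}^{(i)}}^2$, where $\hat{\mc{A}}$ denotes the FFT of $\mc{A}$ along mode 3. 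Combined with Algorithm~\ref{alg_t-product}, which shows that $\widehat{\mc{X}\ast\mc{Y}}^{(i)} = \hat{\mc{X}}^{(i)}\hat{\mc{Y}}^{(i)}$ for each frontal face, this lets me rewrite the objective for any $\tilde{\mc{A}} = \mc{X}\ast\mc{Y} \in \mc{M}$ as
\[
\normf{\mc{A}-\mc{X}\ast\mc{Y}}^2 \> = \> \frac{1}{n_3}\sum_{i=1}^{n_3}\normf{\hat{\mc{A}}^{(i)}-\hat{\mc{X}}^{(i)}\hat{\mc{Y}}^{(i)}}^2 .
\]

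Second, I would observe that the constraints $\mc{X}\in\mb{R}^{n_1\times k\times n_3}$ and $\mc{Y}\in\mb{R}^{k\times n_2\times n_3}$ force each product $\hat{\mc{X}}^{(i)}\hat{\mc{Y}}^{(i)}$ to be an $n_1\times n_2$ matrix of rank at most $k$. Since the summands above are nonnegative and involve disjoint variables, the sum is minimized by minimizing each term separately, and each term is exactly the problem of best rank-$k$ matrix approximation of $\hat{\mc{A}}^{(i)}$ in the Frobenius norm. The classical Eckart--Young--Mirsky theorem then gives the per-face lower bound $\normf{\hat{\mc{A}}^{(i)}-\hat{\mc{X}}^{(i)}\hat{\mc{Y}}^{(i)}}^2 \geq \sum_{j>k}(\hat{\sigma}_j^{(i)})^2$, where the $\hat{\sigma}_j^{(i)}$ are the singular values of $\hat{\mc{A}}^{(i)}$. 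Summing over $i$ and dividing by $n_3$ produces the claimed lower bound for every $\tilde{\mc{A}} \in \mc{M}$.

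To finish, I would verify that the truncated t-SVD $\mc{A}_k$ attains this bound. Because the t-SVD is computed face-by-face in the Fourier domain, $\hat{\mc{A}}^{(i)} = \hat{\mc{U}}^{(i)}\hat{\mc{S}}^{(i)}(\hat{\mc{V}}^{(i)})^{\rm H}$ with $\hat{\mc{S}}^{(i)}$ diagonal and decreasing, so truncating to the leading $k$ lateral slices keeps precisely the rank-$k$ SVD truncation of each face; this realizes the Eckart--Young--Mirsky optimum on every face simultaneously and hence meets the lower bound. The subtle point --- and the step I expect to require the most care --- is confirming that this face-wise optimum corresponds to an admissible factorization with \emph{real} $\mc{X},\mc{Y}$: a real tensor has conjugate-symmetric frontal faces in the Fourier domain, so the decoupled minimizations are not a priori fully independent. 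I would resolve this by noting that the singular values of a matrix and its conjugate coincide and that the best rank-$k$ approximation of $\overline{\hat{\mc{A}}^{(i)}}$ is the conjugate of that of $\hat{\mc{A}}^{(i)}$; choosing the truncations consistently across conjugate partners therefore respects the symmetry constraint, so the inverse FFT returns a real $\mc{A}_k = \mc{U}_k\ast\mc{S}_k\ast\mc{V}_k^{\rm T}$ without sacrificing optimality. This simultaneously establishes that $\mc{A}_k$ is the minimizer over $\mc{M}$ and, upon taking square roots, yields the error formula~\eqref{e_ttsvd_error}.
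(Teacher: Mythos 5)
Your proof is correct and follows essentially the same route as the source this paper relies on: the paper states Theorem~\ref{theorem_theoretical_error} without reproving it (deferring to~\cite{2011kilmer}), but its own Lemma~\ref{l_parseval} is exactly your Parseval step, and the cited Kilmer--Martin argument is likewise Fourier-domain block-diagonalization of the t-product followed by Eckart--Young--Mirsky applied to each frontal face. Your explicit handling of the realness/conjugate-symmetry issue is sound and is a point the standard treatment glosses over: the lower bound needs no independence across faces (each summand is bounded below regardless of the coupling), and attainment only requires choosing the per-face truncations conjugate-consistently so that the inverse FFT returns real factors.
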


The relevance of the Fourier transform is perhaps not immediately obvious, until one examines how the truncated t-SVD is actually computed.  The procedure is given in Algorithm \ref{alg_truncated_t-svd}.

\begin{algorithm}[H]
\label{alg_truncated_t-svd}
\SetAlgoLined
\SetKwInOut{Input}{Input}\SetKwInOut{Output}{Output}
\Input{$\mathcal{A} \in \mathbb{R}^{n_{1} \times n_{2} \times n_{3}}$ and target truncation term $k$ }
\Output{$\mathcal{U}_{k} \in \mathbb{R}^{n_{1} \times k \times n_{3}}$, $\mathcal{S}_{k} \in \mathbb{R}^{k \times k \times n_{3}}$, and $\mathcal{V}_{k} \in \mathbb{R}^{n_{2} \times k \times n_{3}}$}
\BlankLine
 $\hat{\mathcal{A}} \leftarrow \tt{fft}(\mathcal{A},[\,],3)$\;

 \For{$i\leftarrow 1$ \KwTo $n_{3}$}{
 $[\mathbf{U},\mathbf{S},\mathbf{V}] = svd(\mathcal{A}^{(i)})$ \;
 Form $\mathbf{U}_{k}$, $\mathbf{S}_{k}$, and $\mathbf{V}_{k}$ by truncating $\mathbf{U}$, $\mathbf{S}$, and $\mathbf{V}$ with target truncation term $k$\;
 $\hat{\mathcal{U}}_{k}^{(i)} = \mathbf{U}_{k}$;
 $\hat{\mathcal{S}}_{k}^{(i)} = \mathbf{S}_{k}$; 
 $\hat{\mathcal{V}}_{k}^{(i)} = \mathbf{V}_{k}$;   
   }
  $\mathcal{U}_{k} \leftarrow \tt{ifft}(\hat{\mathcal{U}}_{k},[\,],3)$;
   $\mathcal{S}_{k} \leftarrow \tt{ifft}(\hat{\mathcal{S}}_{k},[\,],3)$;
   $\mathcal{V}_{k} \leftarrow \tt{ifft}(\hat{\mathcal{V}}_{k},[\,],3)$;
\caption{k-term truncated t-SVD~\cite{2011kilmer}}
\end{algorithm}

For a tensor $\mathcal{A} \in \mathbb{R}^{n_{1} \times n_{2} \times n_{3}}$, computing a k-term truncated t-SVD using Algorithm \ref{alg_truncated_t-svd} takes $\mc{O}(n_{1}n_{2}n_{3}k)$ flops. For a matrix $\mathbf{A} \in \mathbb{R}^{n_{1}n_{2} \times n_{3}}$, computing a k-term truncated SVD takes $\mc{O}(n_{1}n_{2}n_{3}k)$ flops as well. However, Algorithm \ref{alg_truncated_t-svd} can be computed in parallel over the frontal slices on a cluster, whereas typical algorithms used for the truncated SVD of a matrix cannot be computed in parallel.

\section{Randomized Tensor SVD}\label{rt-svd} 
In this section, we propose the rt-SVD method, which extends the matrix r-SVD method to the t-SVD. The goal of the rt-SVD (randomized tensor SVD) method is to find a good approximate factorization of tensor $\mathcal{A} \in \mathbb{R}^{n_{1} \times n_{2} \times n_{3}}$, $\mathcal{U}_{k} \ast \mathcal{S}_{k} \ast \mathcal{V}_{k}^{\rm T} $. There are two main steps which is summarized in Algorithm~\ref{alg_rt-svd}. The first step is to find a tensor $\mathcal{Q}$ such that 
\begin{equation*}
\mathcal{A}  \approx\mathcal{Q} \ast \mathcal{Q}^{\rm T} \ast \mathcal{A},
\end{equation*}
in a manner that will be made precise, 
and the second step is to connect this low-tubal-rank representation to a rt-SVD factorization. 

\begin{algorithm}[!ht]\label{alg_rt-svd}
\SetAlgoLined
\SetKwInOut{Input}{Input}\SetKwInOut{Output}{Output}
\Input{$\mathcal{A} \in \mathbb{R}^{n_{1} \times n_{2} \times n_{3}}$, target truncation term $k$, and oversampling parameter $p$}
\Output{$\mathcal{U}_{k} \in \mathbb{R}^{n_{1} \times k \times n_{3}}$, $\mathcal{S}_{k} \in \mathbb{R}^{k \times k \times n_{3}}$, and $\mathcal{V}_{k} \in \mathbb{R}^{n_{2} \times k \times n_{3}}$}
\BlankLine
 Generate a Gaussian random tensor $\mathcal{W} \in \mathbb{R}^{n_{2} \times (k+p) \times n_{3}}$\;
 Form a random projection of tensor $\mathcal{A}$ as $\mathcal{Y} = \mathcal{A} \ast \mathcal{W}$\;
Construct the tensor $\mathcal{Q}$ by using t-QR factorization\;
 Form a tensor $\mathcal{B}=\mathcal{Q}^{\rm T} \ast \mathcal{A}$, whose size is $(k+p) \times n_{2} \times n_{3}$\;
 Compute t-SVD of $\mathcal{B}$, truncate it with target truncation term $k$, and obtain $\mathcal{U}$, $\mathcal{S}_{k}$, and $\mathcal{V}_{k}$\;
 Form the rt-SVD of $\mathcal{A}$, $\mathcal{A} \approx (\mathcal{Q} \ast \mathcal{U}) \ast \mathcal{S}_{k} \ast \mathcal{V}^{\rm T}_{k} = \mathcal{U}_{k} \ast \mathcal{S}_{k} \ast \mathcal{V}^{\rm T}_{k}$.
\caption{rt-SVD, spatial domain version}
\end{algorithm}

For the convenience of error analysis , we present an implementation of Algorithm~\ref{alg_rt-svd} in the Fourier domain. This allows us to apply results from the matrix r-SVD independently to each frontal slice.

\begin{algorithm}[!ht]\label{alg_rt-svd_fourier}
\SetAlgoLined

\SetKwInOut{Input}{Input}\SetKwInOut{Output}{Output}
\Input{$\mathcal{A} \in \mathbb{R}^{n_{1} \times n_{2} \times n_{3}}$, target truncation term $k$, and parameter $p$ }
\Output{$\mathcal{U}_{k} \in \mathbb{R}^{n_{1} \times k \times n_{3}}$, $\mathcal{S}_{k} \in \mathbb{R}^{k \times k \times n_{3}}$, and $\mathcal{V}_{k} \in \mathbb{R}^{n_{2} \times k \times n_{3}}$}
\BlankLine
Generate a Gaussian random tensor $\mathcal{W} \in \mathbb{R}^{n_{2} \times (k+p) \times n_{3}}$\;
 $\hat{\mathcal{A}} \leftarrow \tt{fft}(\mathcal{A},[\,],3)$ and $\hat{\mathcal{W}} \leftarrow \tt{fft}(\mathcal{W},[\,],3)$\;
  \For{$i\leftarrow 1$ \KwTo $n_{3}$}{
 $\hat{\mathcal{Y}}^{(i)}=\hat{\mathcal{A}}^{(i)}\hat{\mathcal{W}}^{(i)}$ \;
  $[\hat{\mathcal{Q}}^{(i)},\hat{\mathcal{R}}^{(i)}]=\tt{qr}(\hat{\mathcal{Y}}^{(i)},0)$\;
  $\hat{\mathcal{B}}^{(i)}=(\hat{\mathcal{Q}}^{(i)})^{\rm H}\hat{\mathcal{A}}^{(i)}$\footnotemark \;
  $[\hat{\mathcal{U}}^{(i)},\hat{\mathcal{S}}^{(i)},\hat{\mathcal{V}}^{(i)}]=\tt{svd}(\hat{\mathcal{B}}^{(i)})$\;
  $\hat{\mathcal{U}}_{k}^{(i)}=\hat{\mathcal{Q}}^{(i)}\hat{\mathcal{U}}^{(i)}$; $\hat{\mathcal{S}}_{k}^{(i)}=\hat{\mathcal{S}}^{(i)}(1:k,1:k)$; $\hat{\mathcal{V}}_{k}^{(i)}=\hat{\mathcal{V}}^{(i)}(:,1:k)$.
   }
 $\mathcal{U}_{k} \leftarrow \tt{ifft}(\hat{\mathcal{U}}_{k},[\,],3)$;
   $\mathcal{S}_{k} \leftarrow \tt{ifft}(\hat{\mathcal{S}}_{k},[\,],3)$;
   $\mathcal{V}_{k} \leftarrow \tt{ifft}(\hat{\mathcal{V}}_{k},[\,],3)$.
\caption{rt-SVD, Fourier domain version}
\end{algorithm}

We now present a theorem that gives the expected error of $\|  \mathcal{A}-\mathcal{Q}*\mathcal{Q}^{\rm T} \ast \mathcal{A} \|_{\rm F} $ where the tensor $\mathcal{Q}$ is computed using Algorithm~\ref{alg_rt-svd}.  

\begin{theorem}\label{expectation}
Given an $n_{1} \times n_{2} \times n_{3}$ tensor $\mathcal{A}$  and an $n_{2} \times (k+p) \times n_{3}$ Gaussian random tensor $\mathcal{W}$, if $\mathcal{Q}$ is obtained from t-QR of $\mathcal{Y}  =\mathcal{A} \ast \mathcal{W}$, then
\begin{equation*}
\mathbb{E} \|  \mathcal{A}-\mathcal{Q}*\mathcal{Q}^{\rm T} \ast \mathcal{A} \|_{\rm F} 
 \leq  \sqrt{1+\dfrac{k}{p-1}} \left(\frac{1}{n_3}\sum_{i=1}^{n_{3}}\sum_{j>k} (\hat{\sigma}^{(i)}_{j})^{2}\right)^{1/2}.
\end{equation*}
where $k$ is a target truncation term, $p \geq 2$ is the oversampling parameter, and $\hat{\sigma}_{j}^{(i)}$ is the $i_{th}$ component of $\tt{fft}(\mathcal{S}(j,j,:),[\,],3)$.
\end{theorem}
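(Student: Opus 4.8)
The plan is to reduce the tensor statement to the matrix bound \eqref{expected_error} by passing to the Fourier domain, where the t-product decouples across frontal slices. First I would use that the unnormalized DFT along the third dimension satisfies $\mathbf{F}_{n_3}^{\rm H}\mathbf{F}_{n_3} = n_3\mathbf{I}$, so that for any tensor $\|\mathcal{X}\|_{\rm F}^2 = \tfrac{1}{n_3}\sum_{i=1}^{n_3}\|\hat{\mathcal{X}}^{(i)}\|_{\rm F}^2$ (Parseval). Applying this to $\mathcal{X} = \mathcal{A} - \mathcal{Q}\ast\mathcal{Q}^{\rm T}\ast\mathcal{A}$, and using that the t-product and t-transpose act slice-wise in the Fourier domain as ordinary matrix multiplication and conjugate transpose respectively, I would rewrite
\[
\|\mathcal{A}-\mathcal{Q}\ast\mathcal{Q}^{\rm T}\ast\mathcal{A}\|_{\rm F}^2 = \frac{1}{n_3}\sum_{i=1}^{n_3}\bigl\|\hat{\mathcal{A}}^{(i)} - \hat{\mathcal{Q}}^{(i)}(\hat{\mathcal{Q}}^{(i)})^{\rm H}\hat{\mathcal{A}}^{(i)}\bigr\|_{\rm F}^2.
\]
Each summand is exactly a matrix r-SVD residual for the slice $\hat{\mathcal{A}}^{(i)}$, since $\hat{\mathcal{Q}}^{(i)}$ is the orthogonal factor from the QR of $\hat{\mathcal{Y}}^{(i)} = \hat{\mathcal{A}}^{(i)}\hat{\mathcal{W}}^{(i)}$, exactly as in Algorithm~\ref{alg_rt-svd_fourier}.

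The structural point that makes the matrix theorem applicable is the definition of the Gaussian random tensor: since $\mathcal{W}$ has only $\mathcal{W}^{(1)}$ nonzero, every Fourier slice coincides, $\hat{\mathcal{W}}^{(i)} = \mathcal{W}^{(1)}$, a single real Gaussian matrix $\mathbf{W}\in\mathbb{R}^{n_2\times(k+p)}$. Thus for each $i$ the slice projection is produced by applying the r-SVD with the real Gaussian matrix $\mathbf{W}$ to the (generally complex) matrix $\hat{\mathcal{A}}^{(i)}$, which is precisely the setting of the bound \eqref{expected_error}. I would therefore apply that bound to each slice to get $\mathbb{E}\|\hat{\mathcal{A}}^{(i)} - \hat{\mathcal{Q}}^{(i)}(\hat{\mathcal{Q}}^{(i)})^{\rm H}\hat{\mathcal{A}}^{(i)}\|_{\rm F}^2 \le (1+\tfrac{k}{p-1})\sum_{j>k}(\hat{\sigma}_j^{(i)})^2$, where $\hat{\sigma}_j^{(i)}$ are the singular values of $\hat{\mathcal{A}}^{(i)}$. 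Note that the slices share the same realization of $\mathbf{W}$, so the $\hat{\mathcal{Q}}^{(i)}$ are not independent; this is harmless, because I only need to combine the slice bounds by linearity of expectation, not by any independence argument.

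Combining, by linearity of expectation,
\[
\mathbb{E}\|\mathcal{A}-\mathcal{Q}\ast\mathcal{Q}^{\rm T}\ast\mathcal{A}\|_{\rm F}^2 \le \left(1+\frac{k}{p-1}\right)\frac{1}{n_3}\sum_{i=1}^{n_3}\sum_{j>k}(\hat{\sigma}_j^{(i)})^2,
\]
and a final application of Jensen's inequality, $\mathbb{E}\|\cdot\|_{\rm F}\le(\mathbb{E}\|\cdot\|_{\rm F}^2)^{1/2}$, yields the stated bound, with the bracketed quantity recognized as the optimal truncation error from Theorem~\ref{theorem_theoretical_error}. The main obstacle is the decoupling claim of the first paragraph: I must verify carefully that $\mathcal{Q}\ast\mathcal{Q}^{\rm T}\ast\mathcal{A}$ maps under the Fourier transform to the slice-wise orthogonal projections $\hat{\mathcal{Q}}^{(i)}(\hat{\mathcal{Q}}^{(i)})^{\rm H}\hat{\mathcal{A}}^{(i)}$ — that is, that t-transpose becomes per-slice conjugate transpose and that the t-QR used to form $\mathcal{Q}$ in Algorithm~\ref{alg_rt-svd} is equivalent to the per-slice QR of Algorithm~\ref{alg_rt-svd_fourier} — and to track the $1/n_3$ Parseval constant so that it lands inside the sum exactly as in \eqref{e_ttsvd_error}.
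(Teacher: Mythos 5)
Your proposal is correct and follows essentially the same route as the paper: decouple via the Parseval-type identity (the paper's Lemma~\ref{l_parseval}) into Fourier-domain frontal slices, apply the matrix bound \eqref{expected_error} to each slice, combine by linearity of expectation, and finish with Jensen's inequality (which the paper invokes as H\"{o}lder's). Your explicit observations that every Fourier slice of $\mathcal{W}$ equals the single real Gaussian matrix $\mathcal{W}^{(1)}$ and that the resulting dependence across slices is harmless are points the paper leaves implicit, and they make the argument cleaner.
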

\begin{proof}
See Appendix~\ref{app_tensor}.
\end{proof}

\footnotetext{Since slices of $\hat{\mathcal{A}}$ can be complex in general, we use the notation superscript $\rm H$ instead of  superscript $\rm T$ here.}

Theorem~\ref{expectation} is important because it shows that, in expectation, the error in the rt-SVD algorithm is within a factor $\sqrt{1+\dfrac{k}{p-1}}$ of the optimal result in Theorem~\ref{theorem_theoretical_error}. Note that this is the same optimality factor that one obtains in the matrix case, see Theorem~\ref{expected_error}.

Theorem~\ref{expectation} also shows how the error of the low-tubal-rank approximation  $ \mathcal{Q} \ast \mathcal{Q}^{\rm T} \ast \mathcal{A} $ relies on the decay of singular values of the frontal slices $\hat{\mathcal{A}}^{(i)}$. If the singular values decay rapidly, we can approximate $(\sum_{j>k} \hat{\sigma}^{(i)}_{j})^{1/2}$ by $\hat{\sigma}^{(i)}_{k+1}$  and therefore 
\begin{equation*}
\mathbb{E}\, \|  \mathcal{A}-\mathcal{Q} \ast \mathcal{Q}^{\rm T} \ast \mathcal{A} \|_{\rm F} \leq  \sqrt{ 1+\dfrac{k}{p-1}} \,\max_{1\leq i\leq n_3}\hat{\sigma}^{(i)}_{k+1}. 
\end{equation*}

If the singular values of $\hat{\mathcal{S}}^{(i)}$ decay gradually, we can instead use the following approximation $\sum_{j>k}(\hat{\sigma}^{(i)}_{j})^2 \leq (m-k) (\hat{\sigma}^{(i)}_{k+1})^2$ where $\hat{\sigma}^{(i)}_{k+1}$ is the $(k+1)^{th}$ singular value of the $i^{th}$ frontal slice in the Fourier domain, and $m = \min \{n_1,n_2\}$. However,  $\hat{\sigma}^{(i)}_{k+1}$ can be large, in which case the accuracy of the rt-SVD may be poor. We present a new algorithm (Algorithm~\ref{alg_subspace}) for tensor low-tubal-rank representation based on the t-product that applies the randomized subspace iteration to each frontal slice in the Fourier domain. 

\begin{algorithm}[!ht]\label{alg_subspace}
\SetAlgoLined

\SetKwInOut{Input}{Input}\SetKwInOut{Output}{Output}
\Input{$\mathcal{A} \in \mathbb{R}^{n_{1} \times n_{2} \times n_{3}}$, target truncation term $k$, oversampling parameter $p$, the number of iterations q }
\Output{$\mathcal{U}_{k} \in \mathbb{R}^{n_{1} \times k \times n_{3}}$, $\mathcal{S}_{k} \in \mathbb{R}^{k \times k \times n_{3}}$, and $\mathcal{V}_{k} \in \mathbb{R}^{n_{2} \times k \times n_{3}}$}
\BlankLine

 Generate a Gaussian random tensor $\mathcal{W} \in \mathbb{R}^{n_{2} \times (k+p) \times n_{3}}$\;
 Form a tensor $\mathcal{Y}_{0} = \mathcal{A} \ast \mathcal{W}$ and compute the t-QR factorization $\mathcal{Y}_{0} = \mathcal{Q}_{0} \ast \mathcal{R}_{0}$ \;
  \For{$i\leftarrow 1$ \KwTo $q$}{
 $\mathcal{\tilde{Y}}_{i} = \mathcal{A}^{\tt{T}} \ast \mathcal{Q}_{i-1}$ and compute the t-QR factorization $\mathcal{\tilde{Y}}_{i} = \mathcal{\tilde{Q}}_{i} \ast \mathcal{\tilde{R}}_{i}$\;
   $\mathcal{Y}_{i} = \mathcal{A} \ast \mathcal{\tilde{Q}}_{i}$ and compute the t-QR factorization $\mathcal{Y}_{i} = \mathcal{Q}_{i} \ast \mathcal{R}_{i}$\;
   }
Form a tensor $\mathcal{Q} = \mathcal{Q}_{q} $\;
 Form a tensor $\mathcal{B}=\mathcal{Q}^{\rm T} \ast \mathcal{A}$, the size of $\mathcal{B}$ is $(k+p) \times n_{2} \times n_{3}$ which is smaller than tensor $\mathcal{A}$\;
 Compute t-SVD of $\mathcal{B}$, truncate it, and obtain $\mathcal{U}$, $\mathcal{S}_{k}$,  $\mathcal{V}_{k}$\;
 Form the rt-SVD of $\mathcal{A}$, $\mathcal{A} \approx (\mathcal{Q} \ast \mathcal{U}) \ast \mathcal{S}_{k} \ast \mathcal{V}^{\rm T}_{k} = \mathcal{U}_{k} \ast \mathcal{S}_{k} \ast \mathcal{V}^{\rm T}_{k}$.
\caption{rt-SVD with subspace iteration, spatial domain version}
\end{algorithm}

Algorithm~\ref{alg_subspace} works efficiently when the singular values, given by the diagonals of $\hat{\mathcal{S}}^{(i)}$, decay at the same rate across each frontal slice of $\widehat{\mathcal{A}}$. However, when the singular values decay gradually only for some slices, Algorithm \ref{alg_subspace} may be wasteful in terms of computational costs, since some iterations can be stopped earlier than others. This can be avoided if a different number of iterations $q_i$ is used for each frontal slice.  Let us define the iteration vector as $\mathbf{q}=(q_{1}, q_{2},\dots,q_{n_{3}})^\top$. We present an algorithm (Algorithm~\ref{alg_subspace_fourier}) that employs different iteration count in each frontal slice.

\begin{algorithm}[!ht]\label{alg_subspace_fourier}
\SetAlgoLined

\SetKwInOut{Input}{Input}\SetKwInOut{Output}{Output}
\Input{$\mathcal{A} \in \mathbb{R}^{n_{1} \times n_{2} \times n_{3}}$, target truncation term $k$, parameter $p$, and the iterations vector $\mathbf{q}$ }
\Output{$\mathcal{U}_{k} \in \mathbb{R}^{n_{1} \times k \times n_{3}}$, $\mathcal{S}_{k} \in \mathbb{R}^{k \times k \times n_{3}}$, and $\mathcal{V}_{k} \in \mathbb{R}^{n_{2} \times k \times n_{3}}$}
\BlankLine

 Generate a Gaussian random tensor $\mathcal{W} \in \mathbb{R}^{n_{2} \times (k+p) \times n_{3}}$\;

 $\hat{\mathcal{A}} \leftarrow \tt{fft}(\mathcal{A},[\,],3)$ and $\hat{\mathcal{W}} \leftarrow \tt{fft}(\mathcal{W},[\,],3)$\;

  \For{$i\leftarrow 1$ \KwTo $n_{3}$}{
  $\hat{\mathcal{Y}}^{(i)}=\hat{\mathcal{A}}^{(i)}\hat{\mathcal{W}}^{(i)}$ \;
 
  $[\hat{\mathcal{Q}}_{j-1}^{(i)},\sim]=\tt{qr}(\hat{\mathcal{Y}}^{(i)},0)$\;
   \For{$j\leftarrow 1$ \KwTo $q_{i}$}{
   $\hat{\mathcal{Z}}_{j}^{(i)}=(\hat{\mathcal{A}}^{(i)})^{\rm H}\hat{\mathcal{Q}}_{j-1}^{(i)}$\;
   $[\hat{\mathcal{G}}_{j}^{(i)},\sim]=\tt{qr}(\hat{\mathcal{Z}}^{(i)},0)$\;
   $\hat{\mathcal{Y}}_{j}^{(i)}=\hat{\mathcal{A}}^{(i)}\hat{\mathcal{G}}_{j}^{(i)}$ \;
   $[\hat{\mathcal{Q}}_{j}^{(i)},\sim]=\tt{qr}(\hat{\mathcal{Y}}^{(i)},0)$\;
   }
  Form $\hat{\mathcal{Q}}^{(i)}$ as $\hat{\mathcal{Q}}^{(i)}=\hat{\mathcal{Q}}_{j}^{(i)}$\;
  $\hat{\mathcal{B}}^{(i)}=(\hat{\mathcal{Q}}^{(i)})^{\rm H}\hat{\mathcal{A}}^{(i)}$\;
  
  $[\hat{\mathcal{U}}^{(i)},\hat{\mathcal{S}}^{(i)},\hat{\mathcal{V}}^{(i)}]=\tt{svd}(\hat{\mathcal{B}}^{(i)})$\;
  
  $\hat{\mathcal{U}}_{k}^{(i)}=\hat{\mathcal{Q}}^{(i)}\hat{\mathcal{U}}^{(i)}$; $\hat{\mathcal{S}}_{k}^{(i)}=\hat{\mathcal{S}}^{(i)}(1:k,1:k)$; $\hat{\mathcal{V}}_{k}^{(i)}=\hat{\mathcal{V}}^{(i)}(:,1:k)$.
   }
  $\mathcal{U}_{k} \leftarrow \tt{ifft}(\hat{\mathcal{U}}_{k},[\,],3)$;
   $\mathcal{S}_{k} \leftarrow \tt{ifft}(\hat{\mathcal{S}}_{k},[\,],3)$;
   $\mathcal{V}_{k} \leftarrow \tt{ifft}(\hat{\mathcal{V}}_{k},[\,],3)$.
\caption{rt-SVD with subspace iterations, Fourier domain version} \end{algorithm}

The expected error of the probabilistic part of Algorithm \ref{alg_subspace_fourier} is given in Theorem~\ref{expectation_subspace}.

\begin{theorem}\label{expectation_subspace}
Given an $n_{1} \times n_{2} \times n_{3}$ tensor $\mathcal{A}$  and an $n_{2} \times (k+p) \times n_{3}$ tensor $\mathcal{W}$, if $\mathcal{Q}$ is obtained from Algorithm \ref{alg_subspace_fourier}, then
\begin{equation*}
\mathbb{E}\, \normf{ \mathcal{A}-\mathcal{Q}*\mathcal{Q}^{\rm T} \ast \mathcal{A} } 
 \> \leq \>  
\left( \frac{1}{n_3}\sum^{n_{3}}_{i=1} \left(1 + \frac{k}{p-1}(\tau^{(i)}_k)^{4q_i}\right) \left(\sum_{j>k} (\hat{\sigma}^{(i)}_{j})^2\right) \right)^{1/2},
\end{equation*}
where $k$ is a target truncation term, $p \geq 2$ is the oversampling parameter, $\mathbf{q}$ is the iterations count vector, $\hat{\sigma}_{j}^{(i)}$ is the $i^{th}$ component of $\tt{fft}(\mathcal{S}(j,j,:),[\,],3)$, 
and the singular value gap $\hat{\tau}^{(i)}_{k}=\frac{\hat{\sigma}^{(i)}_{k+1}}{\hat{\sigma}^{(i)}_{j}} \ll 1$.
\end{theorem}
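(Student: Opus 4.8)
The plan is to push the matrix subspace-iteration bound of Theorem~\ref{t_gu_subspace} through to the tensor setting one frontal slice at a time in the Fourier domain, exploiting the fact that both the t-product projection $\mathcal{Q}\ast\mathcal{Q}^{\rm T}\ast\mathcal{A}$ and the Frobenius norm decouple across slices once the FFT is applied along the third mode. The first ingredient is the Parseval-type identity for the unnormalized DFT: since $\normf{\hat{\mathcal{A}}}^2 = n_3\normf{\mathcal{A}}^2$ for any tensor, we have $\normf{\mathcal{A}}^2 = \frac{1}{n_3}\sum_{i=1}^{n_3}\normf{\hat{\mathcal{A}}^{(i)}}^2$. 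Applying this to the residual tensor and using that the t-product acts slice-wise in the Fourier domain, with the spatial transpose $\mathcal{Q}^{\rm T}$ corresponding to the conjugate transpose $(\hat{\mathcal{Q}}^{(i)})^{\rm H}$ of each slice, gives
\[
\normf{\mathcal{A}-\mathcal{Q}\ast\mathcal{Q}^{\rm T}\ast\mathcal{A}}^2 = \frac{1}{n_3}\sum_{i=1}^{n_3}\normf{\hat{\mathcal{A}}^{(i)}-\hat{\mathcal{Q}}^{(i)}(\hat{\mathcal{Q}}^{(i)})^{\rm H}\hat{\mathcal{A}}^{(i)}}^2.
\]

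Next I would identify the inner loop of Algorithm~\ref{alg_subspace_fourier} as exactly $q_i$ steps of randomized subspace iteration applied to the fixed matrix $\hat{\mathcal{A}}^{(i)}$ with starting test matrix $\hat{\mathcal{W}}^{(i)}$. By the definition of the Gaussian random tensor, $\hat{\mathcal{W}}^{(i)} = \mathcal{W}^{(1)}$ is a single standard real Gaussian matrix of size $n_2\times(k+p)$, identical across slices. Each slice therefore satisfies the hypotheses of Theorem~\ref{t_gu_subspace}, with $\hat{\mathcal{A}}^{(i)}\in\mathbb{C}^{n_1\times n_2}$ treated as deterministic and $\mathcal{W}^{(1)}$ as the Gaussian test matrix, so that
\[
\mathbb{E}\normf{\hat{\mathcal{A}}^{(i)}-\hat{\mathcal{Q}}^{(i)}(\hat{\mathcal{Q}}^{(i)})^{\rm H}\hat{\mathcal{A}}^{(i)}}^2 \leq \left(1+\frac{k}{p-1}(\hat{\tau}_k^{(i)})^{4q_i}\right)\sum_{j>k}(\hat{\sigma}_j^{(i)})^2.
\]

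Taking expectations in the decoupled identity and applying linearity of expectation gives
\[
\mathbb{E}\normf{\mathcal{A}-\mathcal{Q}\ast\mathcal{Q}^{\rm T}\ast\mathcal{A}}^2 \leq \frac{1}{n_3}\sum_{i=1}^{n_3}\left(1+\frac{k}{p-1}(\hat{\tau}_k^{(i)})^{4q_i}\right)\sum_{j>k}(\hat{\sigma}_j^{(i)})^2,
\]
and the claimed bound on $\mathbb{E}\normf{\cdot}$ follows from Jensen's inequality, $\mathbb{E}[X]\leq(\mathbb{E}[X^2])^{1/2}$. The main obstacle, and the point I would treat most carefully, is that the slices are \emph{not} statistically independent, since the same random matrix $\mathcal{W}^{(1)}$ drives every slice. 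This dependence turns out to be harmless: the per-slice estimate from Theorem~\ref{t_gu_subspace} controls each marginal expectation, and linearity of expectation applies to the sum regardless of dependence. The supporting facts I would check are that only the first Fourier slice of $\mathcal{W}$ is random while the rest are identical copies (so every slice sees a genuine standard Gaussian), and that Theorem~\ref{t_gu_subspace} remains valid for a complex $\hat{\mathcal{A}}^{(i)}$ paired with a real Gaussian test matrix.
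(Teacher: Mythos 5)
Your proposal is correct and follows essentially the same route as the paper: decouple the residual across frontal slices in the Fourier domain via the Parseval-type identity (the paper's Lemma~\ref{l_parseval}), apply the matrix subspace-iteration bound of Theorem~\ref{t_gu_subspace} to each slice with its own iteration count $q_i$, sum by linearity of expectation, and finish with $\mathbb{E}\normf{\cdot}\leq(\mathbb{E}\normf{\cdot}^2)^{1/2}$ (the paper invokes H\"{o}lder's inequality where you cite Jensen). Your explicit remarks that the slices share the same Gaussian matrix $\mathcal{W}^{(1)}$ (so only marginal expectations and linearity are needed, not independence) and that Theorem~\ref{t_gu_subspace} covers complex $\hat{\mathcal{A}}^{(i)}$ with a real Gaussian test matrix are points the paper leaves implicit, but they do not change the argument.
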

\begin{proof} See Appendix~\ref{app_tensor}.
\end{proof}

If the iteration count $q_i = q$ for all $i=1,\dots,n_3$, then we can use the following simpler bound 
\begin{equation}
\mathbb{E} \normf{  \mathcal{A}-\mathcal{Q}*\mathcal{Q}^{\rm T} \ast \mathcal{A} } \> 
 \leq  \> \sqrt{1+\dfrac{k}{p-1}  (\tau_k^{\max})^{4q} }\left(\frac{1}{n_3}\sum_{i=1}^{n_{3}}\sum_{j>k} (\hat{\sigma}^{(i)}_{j})^{2}\right)^{1/2},
\end{equation}
where $\tau_k^{\max} = \max_{1\leq i \leq n_3} \tau_k^{(i)}$ is the largest singular value gap. In particular, $q=0$ gives the same result as Theorem~\ref{expectation}.

Theorem~\ref{expectation_subspace} suggests an effective strategy to pick the iteration count $q_i$. Suppose we are given a tolerance parameter $0 < \epsilon < 1$. Then we choose 
\begin{equation}
q_i = \left\lceil \frac{1}{4} \left. \log \frac{\epsilon (p-1)}{k} \middle/ \log \tau_k^{(i)} \right.\right\rceil,
\end{equation}
which ensures that the error in $\mathbb{E}\, \normf{ \mathcal{A}-\mathcal{Q}*\mathcal{Q}^{\rm T} \ast \mathcal{A} } $ is at most $\sqrt{1+\epsilon}$ of the optimal result in Theorem~\ref{theorem_theoretical_error}.

Theorems~\ref{expectation} and~\ref{expectation_subspace} provide insight into the average behavior of the error. This next result provides the tail bounds of the probabilistic error. 
\begin{theorem}\label{concentration}
With the assumptions of Theorem~\ref{expectation_subspace}, let $0 < \delta < 1$ be the failure probability and define the constant
\[ C_\delta = \frac{e\sqrt{k+p}}{p+1} \left(\frac{2}{\delta}\right)^{\frac{1}{p+1}} \left(\sqrt{n_2-k} + \sqrt{k+p} + \sqrt{2\log \frac{2}{\delta}}\right).\]
Then with  probability at most  $\delta$,
\begin{equation*}
\normf{ \mathcal{A}-\mathcal{Q}*\mathcal{Q}^{\rm T} \ast \mathcal{A} }^2 
 \> \leq \>  
 \frac{1}{n_3}\sum^{n_{3}}_{i=1} \left(1 + C_\delta^2(\tau^{(i)}_k)^{4q_i}\right) \left(\sum_{j>k} (\hat{\sigma}^{(i)}_{j})^2\right) .
\end{equation*}
 
\end{theorem}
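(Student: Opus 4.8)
The plan is to pass to the Fourier domain, bound each frontal slice by the deterministic subspace-iteration estimate that already underlies Theorem~\ref{t_gu_subspace}, replace the expectation over the Gaussian factor by a high-probability bound, and then re-sum the slices. First I would use that the (unnormalized) DFT along the third mode is, up to the scaling $n_3$, unitary, together with the slicewise form of the $t$-product, to write
\begin{equation*}
\normf{\mathcal{A}-\mathcal{Q}*\mathcal{Q}^{\rm T}\ast\mathcal{A}}^2=\frac{1}{n_3}\sum_{i=1}^{n_3}\normf{\hat{\mathcal{A}}^{(i)}-\hat{\mathcal{Q}}^{(i)}(\hat{\mathcal{Q}}^{(i)})^{\rm H}\hat{\mathcal{A}}^{(i)}}^2 .
\end{equation*}
Since Algorithm~\ref{alg_subspace_fourier} runs exactly the matrix subspace iteration on each slice $\hat{\mathcal{A}}^{(i)}$ with the shared random matrix $G:=\mathcal{W}^{(1)}$ (recall that $\hat{\mathcal{W}}^{(i)}=G$ for every $i$), the problem reduces to the matrix tail bound, slice by slice.

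For a fixed slice I would factor $\hat{\mathcal{A}}^{(i)}=\hat U\hat\Sigma\hat V^{\rm H}$ with $\hat\Sigma=\mathrm{diag}(\hat\Sigma_1,\hat\Sigma_2)$, $\hat\Sigma_1$ the leading $k\times k$ block, and set $\Omega_1=\hat V_1^{\rm H}G$, $\Omega_2=\hat V_2^{\rm H}G$. The deterministic estimate I would reuse from the proof of Theorem~\ref{t_gu_subspace} is
\begin{equation*}
\normf{\hat{\mathcal{A}}^{(i)}-\hat{\mathcal{Q}}^{(i)}(\hat{\mathcal{Q}}^{(i)})^{\rm H}\hat{\mathcal{A}}^{(i)}}^2\le\sum_{j>k}(\hat\sigma_j^{(i)})^2+(\tau_k^{(i)})^{4q_i}\,\normf{\hat\Sigma_2\,\Omega_2\,\Omega_1^\dagger}^2 ,
\end{equation*}
whose expectation over $G$ recovers the factor $k/(p-1)$ and hence Theorem~\ref{expectation_subspace}. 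For a tail bound I instead pull out the spectral norm, $\normf{\hat\Sigma_2\Omega_2\Omega_1^\dagger}^2\le\|\Omega_2\Omega_1^\dagger\|_2^2\sum_{j>k}(\hat\sigma_j^{(i)})^2\le\|\Omega_2\|_2^2\,\|\Omega_1^\dagger\|_2^2\sum_{j>k}(\hat\sigma_j^{(i)})^2$, so that $C_\delta^2$ will play the role that $k/(p-1)$ plays in expectation.

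It then remains to show $\|\Omega_2\|_2\,\|\Omega_1^\dagger\|_2\le C_\delta$ with the stated probability. By unitary invariance of the Gaussian law, $\Omega=\hat V^{\rm H}G$ is again Gaussian, so $\Omega_1$ is $k\times(k+p)$ and $\Omega_2$ is $(n_2-k)\times(k+p)$ Gaussian. I would invoke the two standard Gaussian tail estimates of~\cite{2011halko}, namely $\mathbb{P}\{\|\Omega_1^\dagger\|_2\ge\frac{e\sqrt{k+p}}{p+1}t\}\le t^{-(p+1)}$ and $\mathbb{P}\{\|\Omega_2\|_2\ge\sqrt{n_2-k}+\sqrt{k+p}+s\}\le e^{-s^2/2}$, choosing $t=(2/\delta)^{1/(p+1)}$ and $s=\sqrt{2\log(2/\delta)}$ so that each failure event has probability $\delta/2$; a union bound over the two events gives $\|\Omega_2\Omega_1^\dagger\|_2\le C_\delta$ off an event of probability $\delta$. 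Substituting $C_\delta^2$ for the random factor in every slice and re-summing with the $1/n_3$ weight yields the claim.

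\textbf{Main obstacle.} The delicate point is that the \emph{same} $G$ drives all $n_3$ slices, so the matrices $\Omega^{(i)}=(\hat V^{(i)})^{\rm H}G$ are strongly correlated (and for the genuinely complex slices $\Omega^{(i)}$ is a \emph{complex} Gaussian rather than a real one, for which the real-Gaussian tail estimates are at worst conservative). One useful handle is that $\|\Omega_2^{(i)}\|_2\le\|G\|_2$ holds deterministically and \emph{simultaneously} for every $i$, since $(\hat V_2^{(i)})^{\rm H}$ has orthonormal rows, so that source of slice-dependence can be controlled by a single bound on $\|G\|_2$. The genuinely subtle ingredient is the simultaneous lower bound on $\sigma_{\min}(\Omega_1^{(i)})$ across slices, which cannot be reduced to $\sigma_{\min}(G)$ uniformly over the unitary factors $\hat V_1^{(i)}$. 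Pushing the slicewise tail bounds through without paying a union-bound penalty in $n_3$ is therefore where the argument requires the most care; I would try to mitigate it by exploiting the conjugate symmetry $\hat{\mathcal{A}}^{(n_3-i+2)}=\overline{\hat{\mathcal{A}}^{(i)}}$, which pairs slices with identical $\|\Omega_1^\dagger\|_2$ and so roughly halves the number of distinct events to be controlled.
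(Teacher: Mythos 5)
Your proposal is correct at the level of rigor of the paper and follows the published proof essentially step for step: Lemma~\ref{l_parseval} to pass to the Fourier domain, the deterministic bound of Theorem~\ref{t_struct} applied to each frontal slice, sub-multiplicativity to isolate $\left\|\hat{\mathcal{W}}_2^{(i)}(\hat{\mathcal{W}}_1^{(i)})^\dagger\right\|_2$, and a Gaussian tail bound for that factor. The only difference is bookkeeping: your explicit derivation of the tail bound (the two Halko--Martinsson--Tropp estimates with $t=(2/\delta)^{1/(p+1)}$ and $s=\sqrt{2\log(2/\delta)}$, combined by a union bound) is precisely the content of the result the paper invokes as a black box, Lemma~\ref{l_conc}, i.e.\ Theorem 5.8 of Gu.

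Regarding your ``main obstacle'': the paper does not resolve it, it ignores it. The published proof observes that $\hat{\mathcal{W}}_1^{(i)}$ and $\hat{\mathcal{W}}_2^{(i)}$ are independent for each fixed $i$ (by rotation invariance) and then applies Lemma~\ref{l_conc} slice by slice with the same $\delta$, never taking a union bound over the $n_3$ correlated events; strictly speaking this justifies a failure probability of $n_3\delta$ (or roughly $(n_3/2+1)\delta$ if one exploits the conjugate symmetry you mention), not $\delta$, for the summed bound. Likewise, the complex-slice issue you flag is real and is passed over silently in the paper: $(\hat{\mathcal{V}}^{(i)})^{\rm H}\mathcal{W}^{(1)}$ is a complex unitary transformation of a real Gaussian matrix, which is not literally covered by rotation invariance of real Gaussians nor identical in law to a circularly symmetric complex Gaussian. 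So the two points at which you stalled are exactly the points at which the published argument is loose, and your attempt reproduces everything the paper's proof actually establishes.
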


\begin{proof}
See Appendix~\ref{app_tensor}.
\end{proof}

Theorem~\ref{concentration} shows that although the result holds with high probability, there is an arbitrary small chance that the upper bound may not hold. For the sample values $n_2 = 100$, $k = 30$, $p = 20$ and $\delta = 10^{-16}$, we obtain $C_\delta \approx 43$. 

We now discuss the computational cost of Algorithm~\ref{alg_subspace_fourier}. Recall that the t-SVD requires $\mc{O}\left(n_{1}n_{2}n_{3}\log n_{3}\right)$ flops to transform to the Fourier domain and an additional $\mc{O}\left(n_{1}n_{2}n_{3}\min\{n_1,n_2\}\right)$ flops for the decomposition. 
On the other hand, the rt-SVD method only requires $\mc{O}\left(kn_{1}n_{2}n_{3}\right)$. The rt-SVD can be advantageous when $k \ll \min\{ n_1,n_2\}$.

\section{Numerical Results}\label{Numerical}
In this section, we provide some numerical results on the accuracy of the proposed low-rank representations, as well as on the computation time of the proposed methods. The proposed algorithms are demonstrated on an application to facial recognition. The datasets for the experiments are a subset of the Cropped Extended Yale Face Dataset B~\cite{CroppedYaleB} (abbreviated as Cropped Yale B dataset) and the dataset of faces maintained at AT$\&$T Laboratories Cambridge~\cite{Att} (abbreviated as AT$\&$T dataset). The Cropped Yale B dataset has $1140$ images that contains the first 
$30$ possible illuminations of  $38$ different people. Each image has $192 \times 168$ pixels in a grayscale range. This is collected into a $192 \times 1140 \times 168$ tensor, and this tensor is denoted by $\mathcal{B}$. The AT$\&$T dataset has $400$ images that contains $10$ different poses of  $40$ people. Each image has $112 \times 92$ pixels in a grayscale which is collected into a $112 \times 400 \times 92$ tensor, denoted as $\mathcal{E}$. The experiments were run on a laptop with $2.3$ GHz Intel Core i7 and  $8$ GB memory.
\subsection{Error Analysis}

In Section~\ref{rt-svd}, we derived theoretical results for the expected approximation errors of rt-SVD and rt-SVD with subspace iteration. Here, we provide some numerical results to demonstrate their comparative performance.  We compare the relative errors obtained by using rt-SVD, rt-SVD with subspace iteration and the relative theoretically minimal errors on the dataset $\mathcal{B}$. The target truncation term $k$ is allowed to vary between $50$ and $180$.
 We define the relative errors obtained by using the rt-SVD with subspace iteration as $e^{q}_{k}$,   
 \begin{equation}
 e^{q}_{k} \> = \> \frac{\| (\mc{I} - \mc{Q}\mc{Q}^\top)\mathcal{A}  \|_{\rm F}}{\| \mathcal{A}\|_{F}}, 
  \end{equation}
 where $q$ represents the number of iterations (see Algorithm~\ref{alg_subspace}) and $k$ denotes the target truncation term. Because the rt-SVD is a specific case of rt-SVD with subspace iteration with $q = 0$, we will use $e^{0}_{k}$ to denote the relative errors obtained by using rt-SVD.

 Theorem~\ref{theorem_theoretical_error}, gives us the best possible relative error $e_{k}$, as a function of the  target truncation term $k$.
 
  \begin{equation}
  	e_{k} \> \equiv \> \frac{\| \mathcal{A} - \mathcal{A}_{k} \|_{\rm F}}{\| \mathcal{A}\|_{F}} = 
	\frac{\| \hat{\mathcal{S}}(k+1:n,k+1:n,:) \|_{\rm F}}{\| \hat{\mathcal{S}}\|_{F}}
  \end{equation}
  
\begin{figure}[h]\label{theoretical_error}
\centering
\includegraphics[scale=.35]{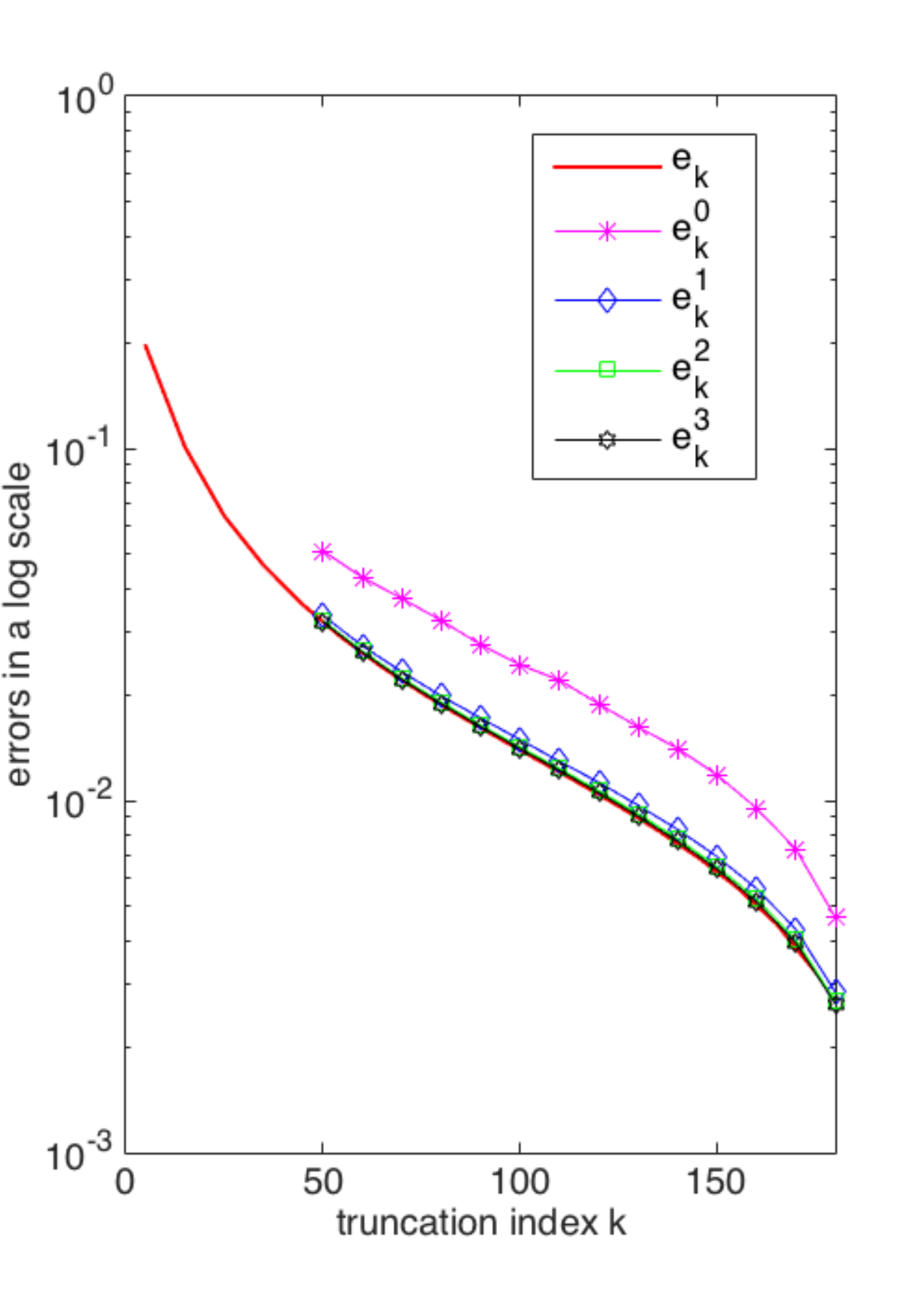}
\includegraphics[scale=.35]{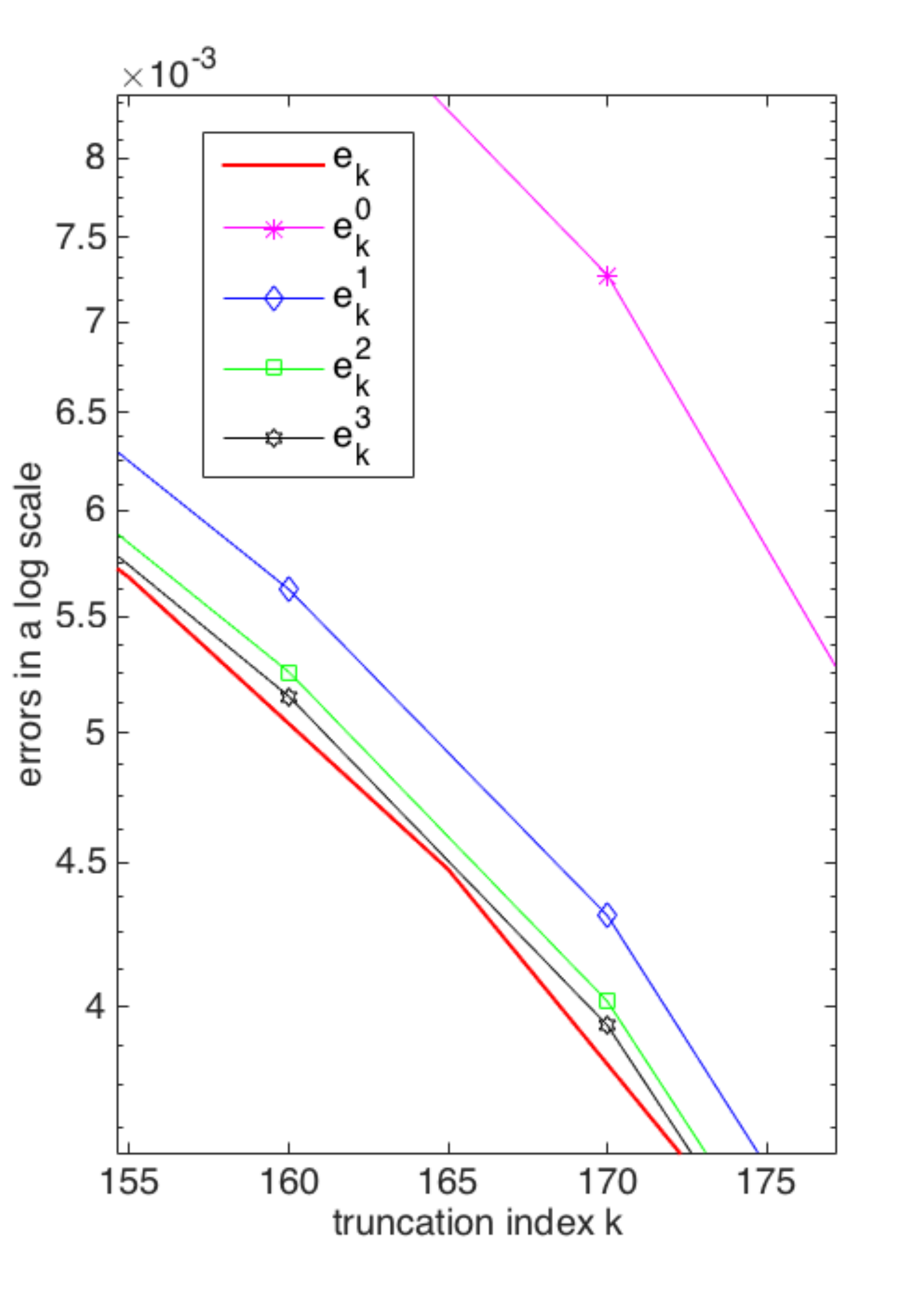}
\caption{(left) The comparison of the theoretically minimal errors and the errors of rt-SVD with subspace iterations. (right) The zoomed in version of the left panel.}
\end{figure}
Figure~\ref{theoretical_error} shows that the errors $e^{q}_{k}$, with different number of iterations, have the similar convergence trajectories and are quite close to the best possible theoretical error $e_{k}$.  In other words, rt-SVD and rt-SVD with subspace iteration are both comparable in accuracy with the truncated t-SVD. Moreover, Figure~\ref{theoretical_error} shows that $e^{q}_{k}$ approaches $e_{k}$ when the number of iterations $q$ increases, yielding a more accurate approximation. 

\subsection{Facial Recognition}
In this section,  we apply the rt-SVD and rt-SVD with subspace iterations on the Cropped Yale B dataset and AT$\&$T dataset. The images from each database is split into a training and test datasets. The tensors are constructed in such a way that the various images stored as lateral slices of the tensor. We process the training dataset and store a projector tensor $\mathcal{U}_{k}$ and a coefficient tensor $\mathcal{C}$ of smaller dimensions as described in Table~\ref{procedure}.  For each image in the test dataset, we obtain a tensor coefficient by projecting onto the training dataset, and the face is recognized as the lateral slice with the closest distance to the tensor coefficient.  The procedure for processing the training  and test datasets is shown in Table~\ref{procedure}.

\begin{table}[!ht]
\caption{The procedure of facial recognition based on t-SVD method}\label{procedure}
\begin{center}
\begin{tabular}{|p{7cm}|p{7cm}|}
\hline
\multicolumn{2}{|c|}{Facial Recognition Procedure}\\
\hline
For the training Dataset: & For the new image in the test dataset:\\
\hline
1. Form the training dataset into a third order tensor and calculate the mean lateral slice across the second dimension; & 1. Form the new image as a tensor with only one lateral slice and subtract the mean lateral slice from it; \\
& \\
2. Calculate standard mean-shifted tensor and denote it as tensor $\mathcal{A}$; & 2. Compute the standard mean-shifted lateral slice and denote it as $\mathcal{T}$;\\
& \\
3. Compute the truncated t-SVD of $\mathcal{A}$ or the approximated truncated t-SVD of $\mathcal{A}$ with target truncation term $k$; & 3. Compute the coefficient tensor $\mathcal{C}_{t}=\mathcal{U}_{k}^{\tt{T}} \ast \mathcal{T}$;\\
& \\
4. Compute the coefficient tensor $\mathcal{C}=\mathcal{U}_{k}^{\tt{T}} \ast \mathcal{A}$, and store it with projector tensor $\mathcal{U}_{k}$. & 4. Find the smallest distance of $\mathcal{C}_{t}$ with each lateral slices of  $\mathcal{C}$.\\
\hline 

\end{tabular}
\end{center}
\end{table}

To measure the performance more rigorously, we use 10-fold cross-validation. In 10-fold cross-validation, the dataset is randomly partitioned into $10$ equal-size subsets. In the $k^{th}$ trial, the $k$ subset (also referred to as a fold) is used as the test dataset, whereas the other $9$ subsets are simultaneously used as training dataset. Therefore, the algorithm will be tested $10$ times with 10 different combinations of the same dataset. The randomized algorithms are run $20$ times for each fold to compute the mean, maximum, and minimum of recognition rates. The recognition rate here is defined as 
\begin{equation*}
	r = \frac{\tt{the\, number \,of \,images\, recognized \,correctly}}{\tt{the\, number\, of \,test \,images}}. 
\end{equation*}

\subsubsection{Cropped Yale Face B Dataset}\label{YaleB}
There are 1140 images in Cropped Yale B Dataset, so the size of training dataset is  $192 \times 1026 \times 168$ and the size of test dataset is  $192 \times 114 \times 168$ in each fold. A few sample images from the Cropped Yale B dataset under different illuminations are shown in Figure~\ref{CPpeople}. 

\begin{figure}[!ht]\label{CPpeople}
\centering
\includegraphics[scale = .7]{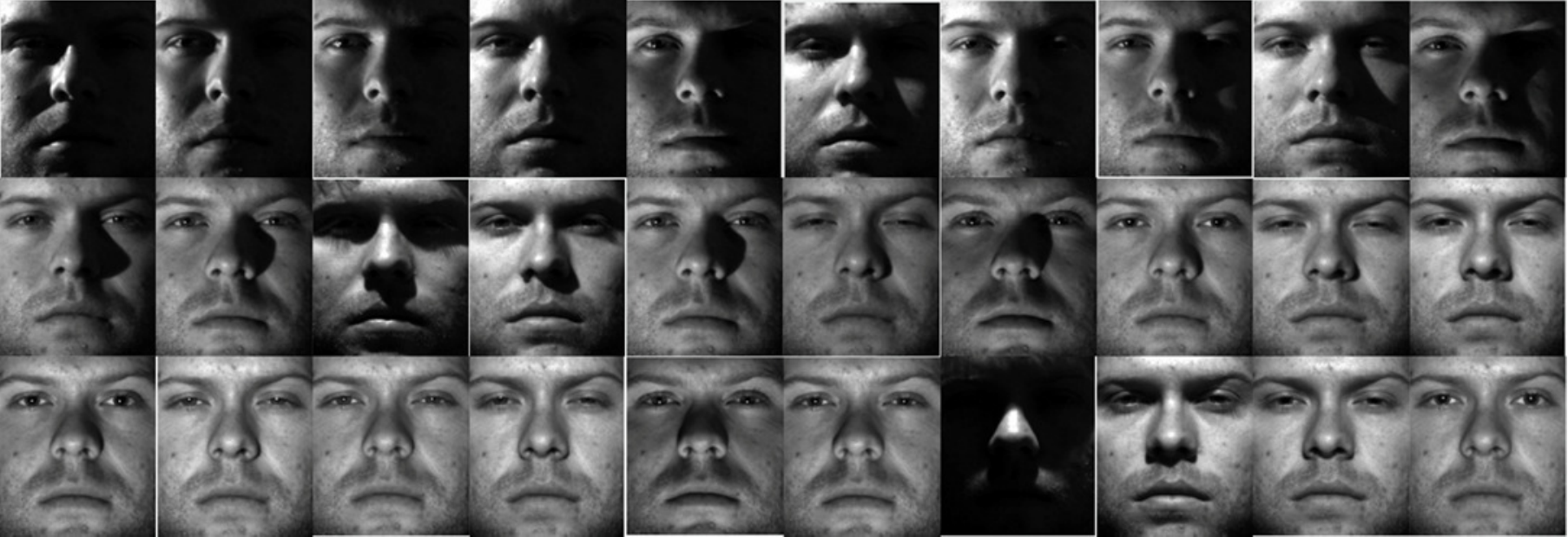}
\caption{Sample images from Cropped Yale B Dataset}.
\end{figure}

Table \ref{rank25_CP} shows the accuracy of the rt-SVD algorithm, whereas Table~\ref{rank50_CP} shows the accuracy of rt-SVD with subspace iterations. The computational times are reported in  Figure~\ref{CP_run}. We observe that as the truncation term gets larger, the  recognition rate increases, as well as the  computation time. In practice, the target truncation term $k$ depends on the tradeoff between recognition rate and computational time. We report results for target truncation term ranging from $25$ to $50$.

\begin{table}[!ht]
\centering
\caption{Recognition Rates on Cropped Yale B dataset with $k= 25$}
\begin{tabular}{|l|l|l|l|l|l|l|l|l|l|l|}
\hline
r & fold 1 & fold 2 & fold 3 & fold 4 & fold 5 & fold 6 & fold 7 & fold 8 & fold 9 & fold 10 \\ \hline
 \multicolumn{11}{|c|}{The t-SVD method}\\
\hline
&0.9912 & 1.0000 & 0.9211 & 1.0000 & 1.0000 & 0.9912 & 0.9035 & 0.9737 & 0.7368 & 0.9825 \\
   \hline
 \multicolumn{11}{|c|}{The rt-SVD method}\\
\hline
mean &0.9912 & 1.0000 & 0.9175 & 0.9943 & 1.0000 & 0.9912 & 0.9035 & 0.9737 & 0.7368 & 0.9772 \\

min &0.9912 & 1.0000 & 0.9123 & 0.9912 & 1.0000 & 0.9912 & 0.9035 & 0.9737 & 0.7368 & 0.9737 \\
max & 0.9912 & 1.0000 & 0.9211 & 1.0000 & 1.0000 & 0.9912 & 0.9035 & 0.9737 & 0.7368 & 0.9912 \\
 \hline
 \multicolumn{11}{|c|}{The rt-SVD method with subspace iterations $q=1$}\\
\hline
mean & 0.9912 & 1.0000 & 0.9211 & 1.0000 & 1.0000 & 0.9912 & 0.9035 & 0.9737 & 0.7368 & 0.9833 \\
min &0.9912 & 1.0000 & 0.9211 & 1.0000 & 1.0000 & 0.9912 & 0.9035 & 0.9737 & 0.7368 & 0.9737 \\
max & 0.9912 & 1.0000 & 0.9211 & 1.0000 & 1.0000 & 0.9912 & 0.9035 & 0.9737 & 0.7368 & 0.9912 \\
 \hline
 \multicolumn{11}{|c|}{The rt-SVD method with subspace iterations $q=2$}\\
\hline
mean & 0.9912 & 1.0000 & 0.9211 & 1.0000 & 1.0000 & 0.9912 & 0.9035 & 0.9737 & 0.7368 & 0.9882 \\
min & 0.9912 & 1.0000 & 0.9211 & 1.0000 & 1.0000 & 0.9912 & 0.9035 & 0.9737 & 0.7368 & 0.9825 \\
max & 0.9912 & 1.0000 & 0.9211 & 1.0000 & 1.0000 & 0.9912 & 0.9035 & 0.9737 & 0.7368 & 0.9912 \\
\hline

\end{tabular}\label{rank25_CP}
\end{table}

\begin{table}[!ht]
\centering
\caption{Recognition Rates on Cropped Yale B dataset with $k=50$}
\begin{tabular}{|l|l|l|l|l|l|l|l|l|l|l|}
\hline
r & fold 1 & fold 2 & fold 3 & fold 4 & fold 5 & fold 6 & fold 7 & fold 8 & fold 9 & fold 10 \\ \hline
\multicolumn{11}{|c|}{The t-SVD method}\\
\hline
 & 0.9912 & 1.0000 & 0.9298 & 1.0000 & 1.0000 & 0.9912 & 0.9035 & 0.9825 & 0.7368 & 0.9912 \\
 \hline
 \multicolumn{11}{|c|}{The rt-SVD method}\\
\hline
mean & 0.9912 & 1.0000 & 0.9298 & 1.0000 & 1.0000 & 0.9912 & 0.9035 & 0.9825 & 0.7368 & 0.9912 \\
min & 0.9912 & 1.0000 & 0.9298 & 1.0000 & 1.0000 & 0.9912 & 0.9035 & 0.9825 & 0.7368 & 0.9912 \\
max & 0.9912 & 1.0000 & 0.9298 & 1.0000 & 1.0000 & 0.9912 & 0.9035 & 0.9825 & 0.7368 & 0.9912 \\

 \hline
 \multicolumn{11}{|c|}{The rt-SVD method with subspace iterations $q=1$}\\
\hline
mean & 0.9912 & 1.0000 & 0.9298 & 1.0000 & 1.0000 & 0.9912 & 0.9035 & 0.9825 & 0.7368 & 0.9912 \\
min & 0.9912 & 1.0000 & 0.9298 & 1.0000 & 1.0000 & 0.9912 & 0.9035 & 0.9825 & 0.7368 & 0.9912 \\
max & 0.9912 & 1.0000 & 0.9298 & 1.0000 & 1.0000 & 0.9912 & 0.9035 & 0.9825 & 0.7368 & 0.9912 \\
\hline
 \multicolumn{11}{|c|}{The rt-SVD method with subspace iterations $q=2$}\\
\hline
mean &  0.9912 & 1.0000 & 0.9298 & 1.0000 & 1.0000 & 0.9912 & 0.9035 & 0.9825 & 0.7368 & 0.9912 \\
min &  0.9912 & 1.0000 & 0.9298 & 1.0000 & 1.0000 & 0.9912 & 0.9035 & 0.9825 & 0.7368 & 0.9912 \\
max & 0.9912 & 1.0000 & 0.9298 & 1.0000 & 1.0000 & 0.9912 & 0.9035 & 0.9825 & 0.7368 & 0.9912 \\
 \hline

\end{tabular}\label{rank50_CP}
\end{table}

\begin{figure}[!ht]
\centering
\includegraphics[scale=.5]{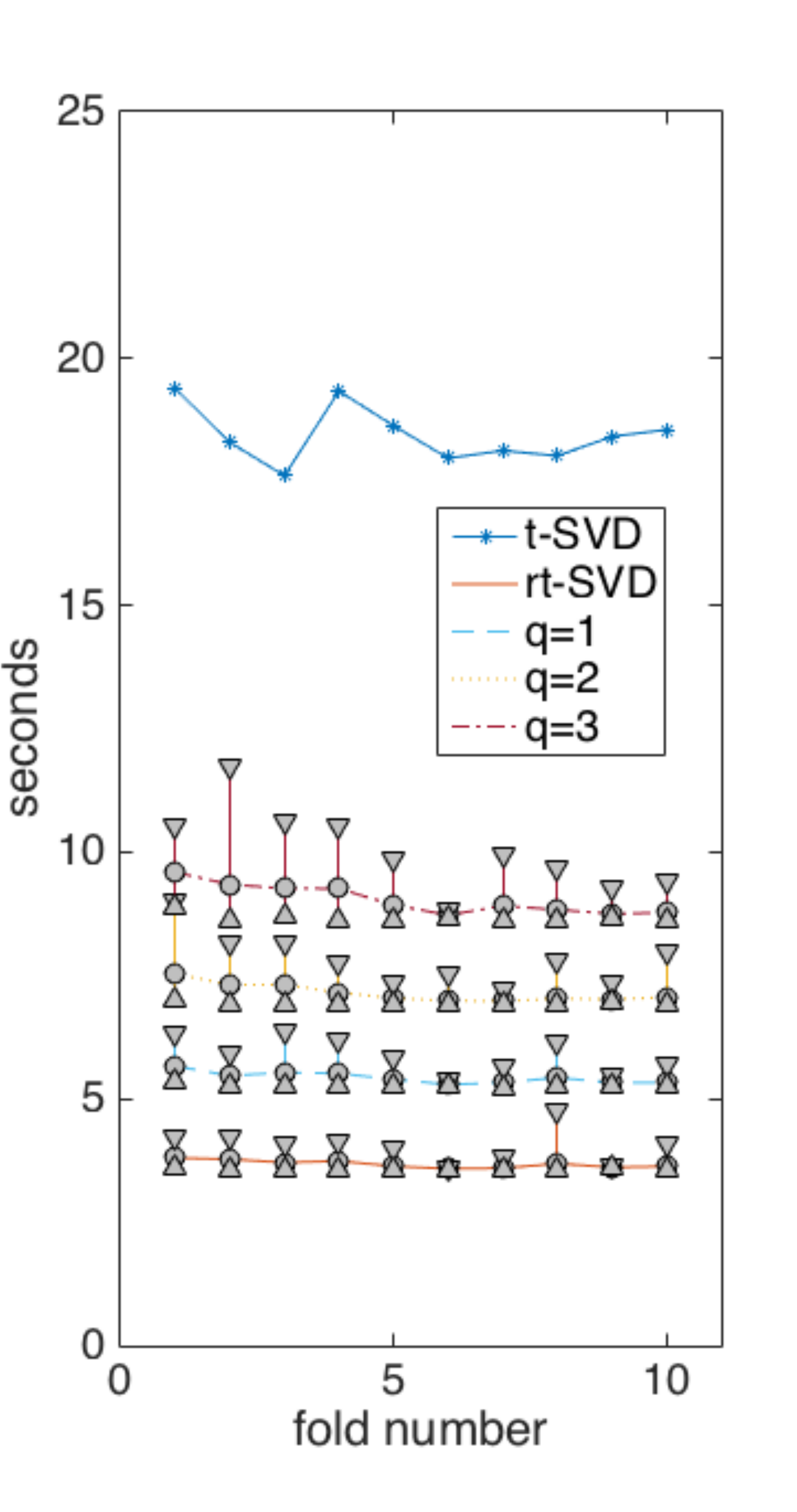}
\includegraphics[scale=.5]{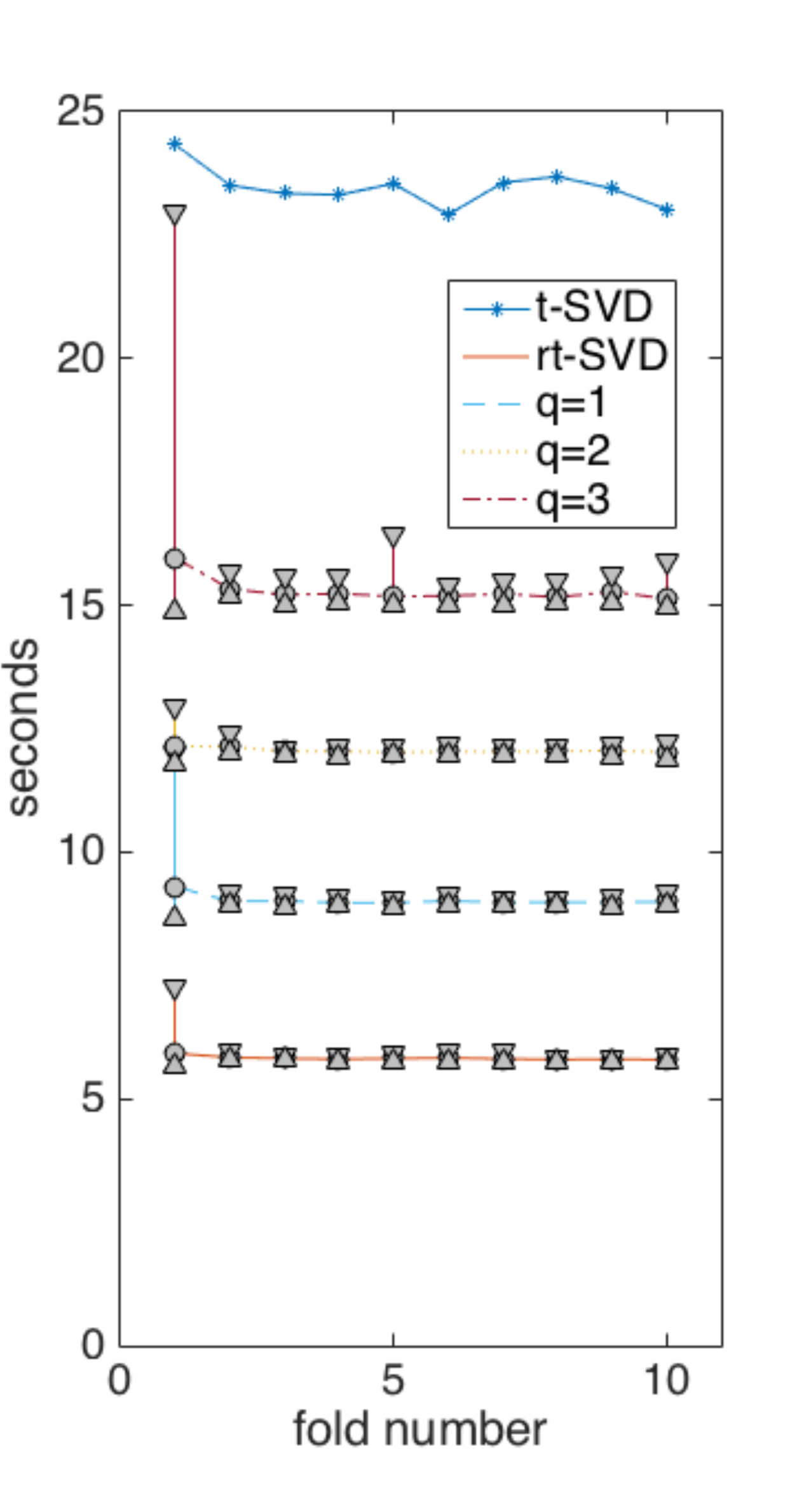}
\caption{(left) Running time to process the the training dataset of Cropped Yale B with $k= 25$. (right) Running time to process the training dataset of Cropped Yale B with $k=50$.}\label{CP_run}
\end{figure}

In Table \ref{rank25_CP}, we make the following observations. 
\begin{itemize}
	\item The minimum and maximum recognition rates are very close to the mean recognition rate in the series of rt-SVD methods.
	\item Comparing the recognition rate between t-SVD and the series of rt-SVD methods, they are identical in $7$ out of $10$ folds. In the other $3$ folds (fold 3, fold 4, and fold 10), the difference is very slight, less than $.001$. 
    \item In fold 10, the maximum recognition rates of the series of rt-SVD method are even slightly higher than the recognition rate of rt-SVD.
	\end{itemize}
	
 The randomized algorithms show almost no variation between different realizations which shows that while there is a probability of failure, however small,  the accuracy of the low-rank representations concentrates about its mean value. Table \ref{rank50_CP} shows similar results as Table \ref{rank25_CP}. In particular, the recognition rates are identical in each fold. The rt-SVD  is about a third as expensive as the full t-SVD.

\subsubsection{AT$\&$T Dataset}
For the AT$\&$T dataset, there are 400 images, so the size of training dataset in each fold is  $112 \times 360 \times 92$ and the size of test dataset in each fold is  $192 \times 40 \times 168$. As compared to the  Cropped Yale B dataset, the AT$\&$T dataset has images of people with different poses, some sample faces are shown in Figure~\ref{attpeople}. As we discussed in subsection \ref{YaleB}, we provide the result with two different target truncation terms, $15$ and $25$. Tables~\ref{rank15_att} and~\ref{rank25_att} show the performance of rt-SVD and rt-SVD with subspace iterations. Figure~\ref{att_run} shows the comparison of running times. The numerical results are consistent with numerical result on the Cropped Yale B dataset, and this demonstrates our algorithms have good performance on both the illumination-varying dataset and pose-varying dataset.

\begin{figure}[!ht]\label{attpeople}
\centering
\includegraphics[scale = .5]{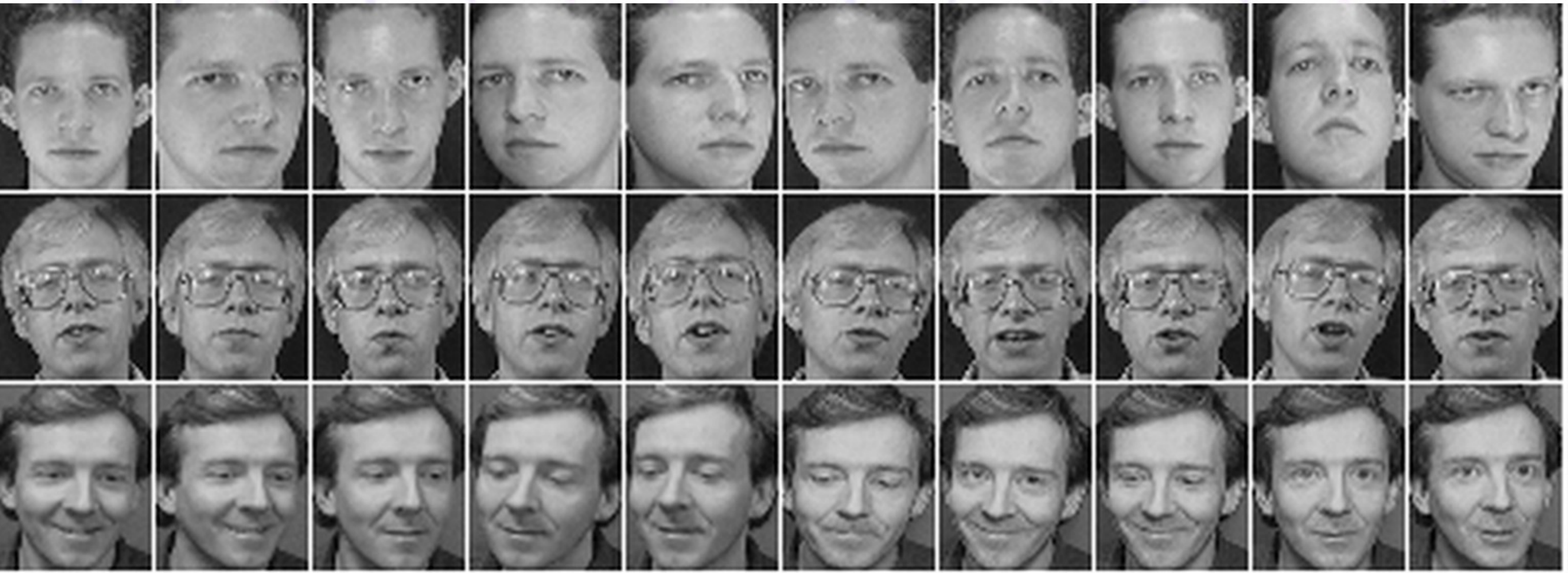}
\caption{Sample images from AT$\&$T dataset.}
\end{figure}

\begin{table}[!ht]
\centering
\caption{Recognition Rates on AT$\&$T dataset with $k= 15$}
\begin{tabular}{|l|l|l|l|l|l|l|l|l|l|l|}
\hline
r & fold 1 & fold 2 & fold 3 & fold 4 & fold 5 & fold 6 & fold 7 & fold 8 & fold 9 & fold 10 \\ \hline
 \multicolumn{11}{|c|}{The t-SVD method}\\
\hline
 &  0.9750 & 1.0000 & 1.0000 & 0.9750 & 0.9750 & 0.9500 & 0.9250 & 1.0000 & 0.9750 & 0.9000 \\
\hline
\multicolumn{11}{|c|}{The rt-SVD method}\\
\hline
mean & 0.9750 & 1.0000 & 1.0000 & 0.9775 & 0.9750 & 0.9537 & 0.9250 & 1.0000 & 0.9750 & 0.9013 \\
min &0.9750 & 1.0000 & 1.0000 & 0.9750 & 0.9750 & 0.9500 & 0.9250 & 1.0000 & 0.9750 & 0.9000 \\
max & 0.9750 & 1.0000 & 1.0000 & 1.0000 & 0.9750 & 0.9750 & 0.9250 & 1.0000 & 0.9750 & 0.9250 \\
 \hline
 \multicolumn{11}{|c|}{The rt-SVD method with subspace iterations $q=1$}\\
\hline
mean & 0.9750 & 1.0000 & 1.0000 & 0.9750 & 0.9750 & 0.9500 & 0.9250 & 1.0000 & 0.9750 & 0.9000 \\
min &  0.9750 & 1.0000 & 1.0000 & 0.9750 & 0.9750 & 0.9500 & 0.9250 & 1.0000 & 0.9750 & 0.9000 \\
max & 0.9750 & 1.0000 & 1.0000 & 0.9750 & 0.9750 & 0.9500 & 0.9250 & 1.0000 & 0.9750 & 0.9000 \\
 \hline
 \multicolumn{11}{|c|}{The rt-SVD method with subspace iterations $q=2$}\\
\hline
mean & 0.9750 & 1.0000 & 1.0000 & 0.9750 & 0.9750 & 0.9500 & 0.9250 & 1.0000 & 0.9750 & 0.9000 \\
min & 0.9750 & 1.0000 & 1.0000 & 0.9750 & 0.9750 & 0.9500 & 0.9250 & 1.0000 & 0.9750 & 0.9000 \\
max &  0.9750 & 1.0000 & 1.0000 & 0.9750 & 0.9750 & 0.9500 & 0.9250 & 1.0000 & 0.9750 & 0.9000 \\
 \hline

\end{tabular}\label{rank15_att}

\end{table}

\begin{table}[!ht]
\centering
\caption{Recognition Rates on AT$\&$T dataset with $k= 25$}
\begin{tabular}{|l|l|l|l|l|l|l|l|l|l|l|}
\hline
r & fold 1 & fold 2 & fold 3 & fold 4 & fold 5 & fold 6 & fold 7 & fold 8 & fold 9 & fold 10 \\ \hline
\multicolumn{11}{|c|}{The t-SVD method}\\
\hline
 & 0.9750 & 1.0000 & 1.0000 & 0.9750 & 0.9500 & 0.9500 & 0.9250 & 1.0000 & 0.9750 & 0.9000 \\
\hline
 \multicolumn{11}{|c|}{The rt-SVD method}\\
\hline
mean &0.9750 & 1.0000 & 1.0000 & 0.9750 & 0.9587 & 0.9500 & 0.9250 & 1.0000 & 0.9750 & 0.9000 \\

min & 0.9750 & 1.0000 & 1.0000 & 0.9750 & 0.9500 & 0.9500 & 0.9250 & 1.0000 & 0.9750 & 0.9000 \\

max & 0.9750 & 1.0000 & 1.0000 & 0.9750 & 0.9750 & 0.9500 & 0.9250 & 1.0000 & 0.9750 & 0.9000 \\

\hline
\multicolumn{11}{|c|}{The rt-SVD method with subspace iterations $q=1$}\\
\hline
mean &  0.9750 & 1.0000 & 1.0000 & 0.9750 & 0.9500 & 0.9500 & 0.9250 & 1.0000 & 0.9750 & 0.9000 \\

min & 0.9750 & 1.0000 & 1.0000 & 0.9750 & 0.9500 & 0.9500 & 0.9250 & 1.0000 & 0.9750 & 0.9000 \\

max &  0.9750 & 1.0000 & 1.0000 & 0.9750 & 0.9500 & 0.9500 & 0.9250 & 1.0000 & 0.9750 & 0.9000 \\

 \hline
 \multicolumn{11}{|c|}{The rt-SVD method with subspace iterations $q=2$}\\
\hline
mean & 0.9750 & 1.0000 & 1.0000 & 0.9750 & 0.9500 & 0.9500 & 0.9250 & 1.0000 & 0.9750 & 0.9000 \\
min & 0.9750 & 1.0000 & 1.0000 & 0.9750 & 0.9500 & 0.9500 & 0.9250 & 1.0000 & 0.9750 & 0.9000 \\
max & 0.9750 & 1.0000 & 1.0000 & 0.9750 & 0.9500 & 0.9500 & 0.9250 & 1.0000 & 0.9750 & 0.9000 \\
\hline

\end{tabular}\label{rank25_att}
\end{table}


\begin{figure}[!ht]
\centering
\includegraphics[scale=.45]{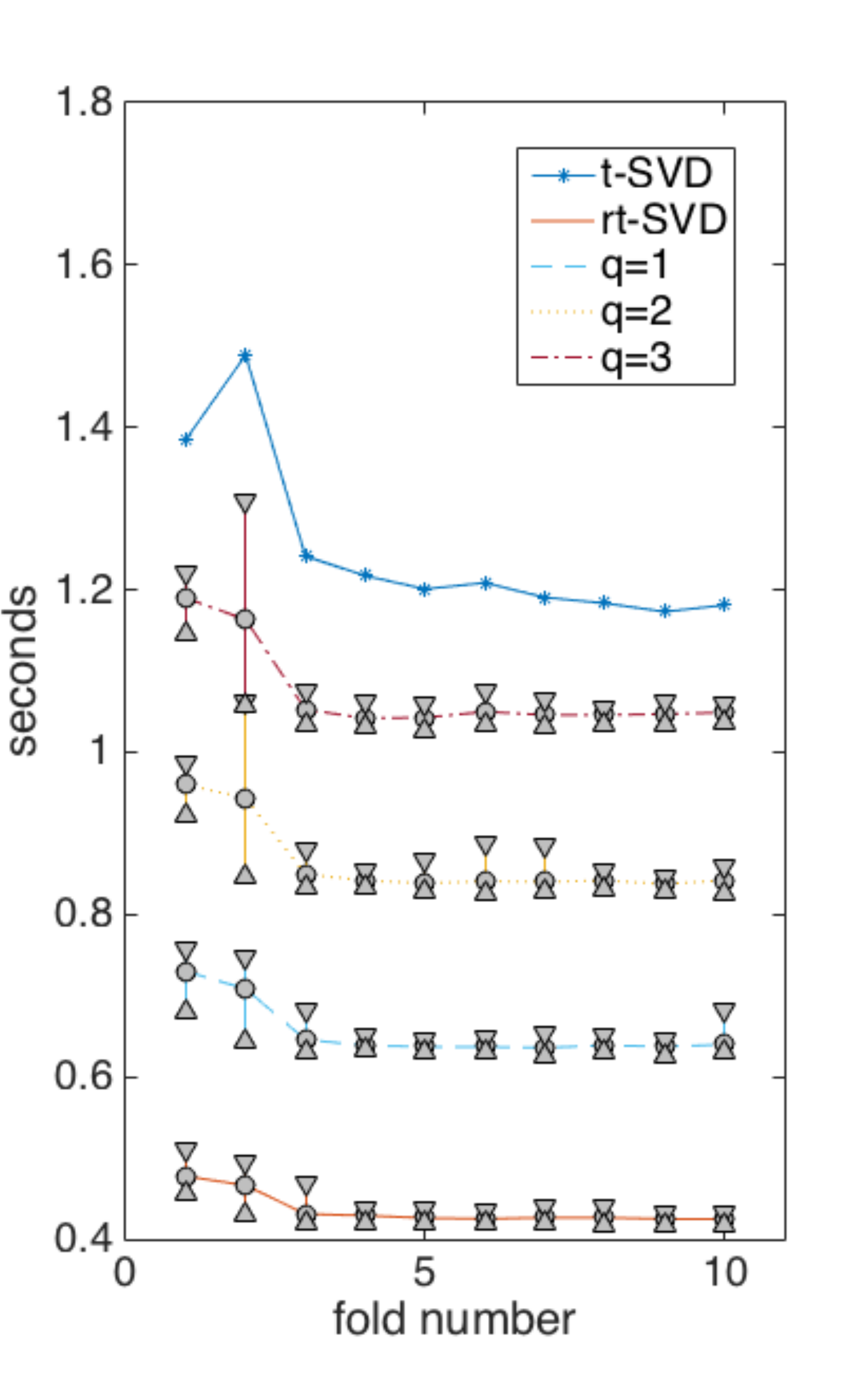}
\includegraphics[scale=.45]{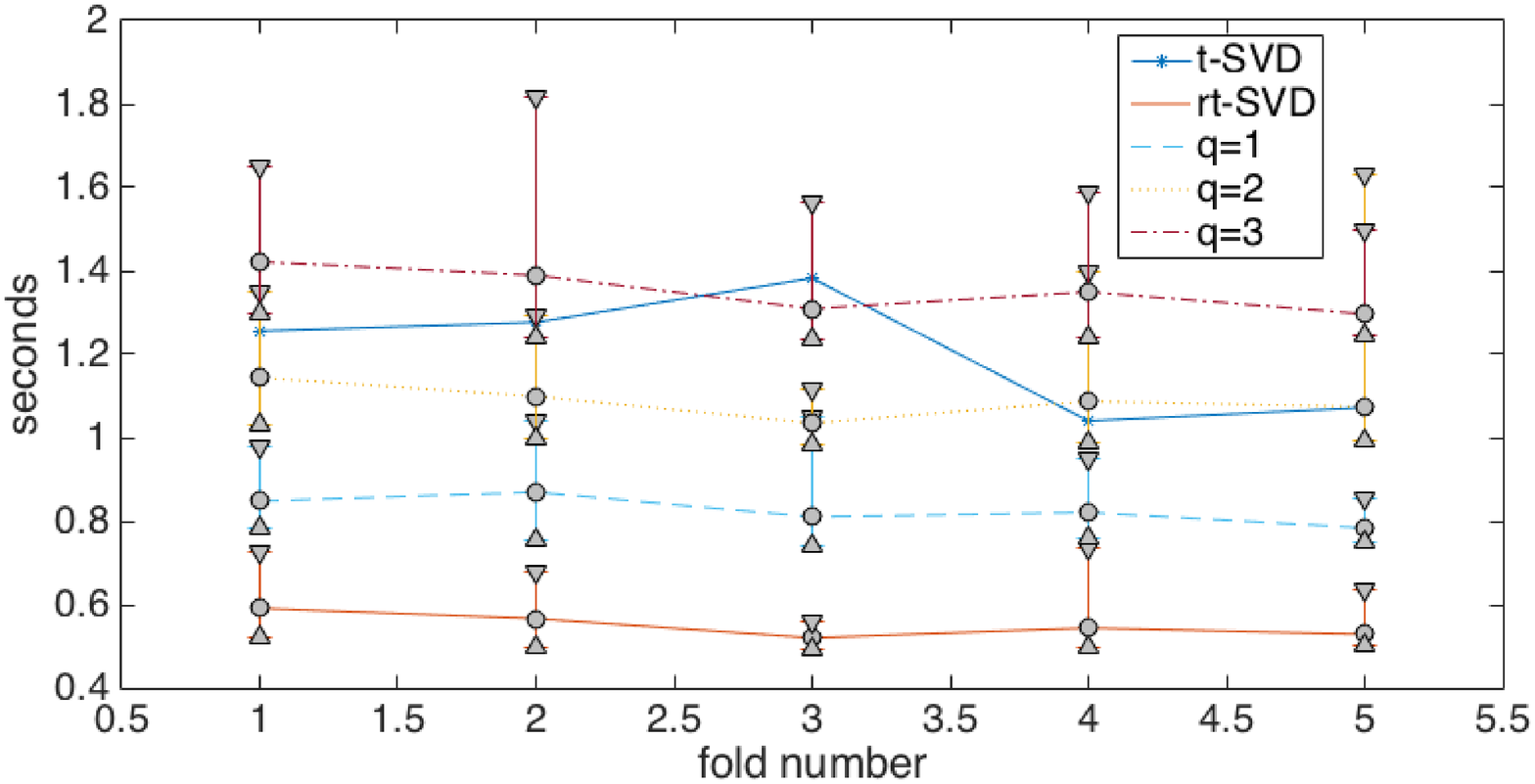}
\caption{(up) Running time to process the $AT\&T$ training dataset with $k= 15$. (down) Running time to process the $AT\&T$ training dataset with $k=25$.}\label{att_run}
\end{figure}


\subsection{Computation time for parallel implementation}
In the facial recognition application, most of the computation time is spent on computing the compression, either exactly or approximately. In this subsection, we report the computation times of computing truncated t-SVD, rt-SVD, and rt-SVD with subspace iterations implemented in parallel on a cluster. The dataset we use, as an example, is the Cropped Yale B dataset $\mathcal{B}$, and the target truncation term (i.e., $k$) is $50$. The experiments are run on Matlab 2015a in (Tufts cluster with  Intel(R) Xeon(R) CPU X5675 running at 3.07 GHz (8 cores)), and up to $8$ processors are used. The t-SVD is computed by Algorithm \ref{alg_truncated_t-svd}, the rt-SVD is computed using Algorithm \ref{alg_rt-svd_fourier}, and the rt-SVD with subspace iterations is computed using Algorithm~\ref{alg_subspace_fourier}. The results are shown in Figure~\ref{fig_cluster}; as can be seen, with increasing number of processors, the runtime using the parallel computing, decreases on average. 


\begin{figure}[!ht]
\begin{center}
\includegraphics[width=\textwidth]{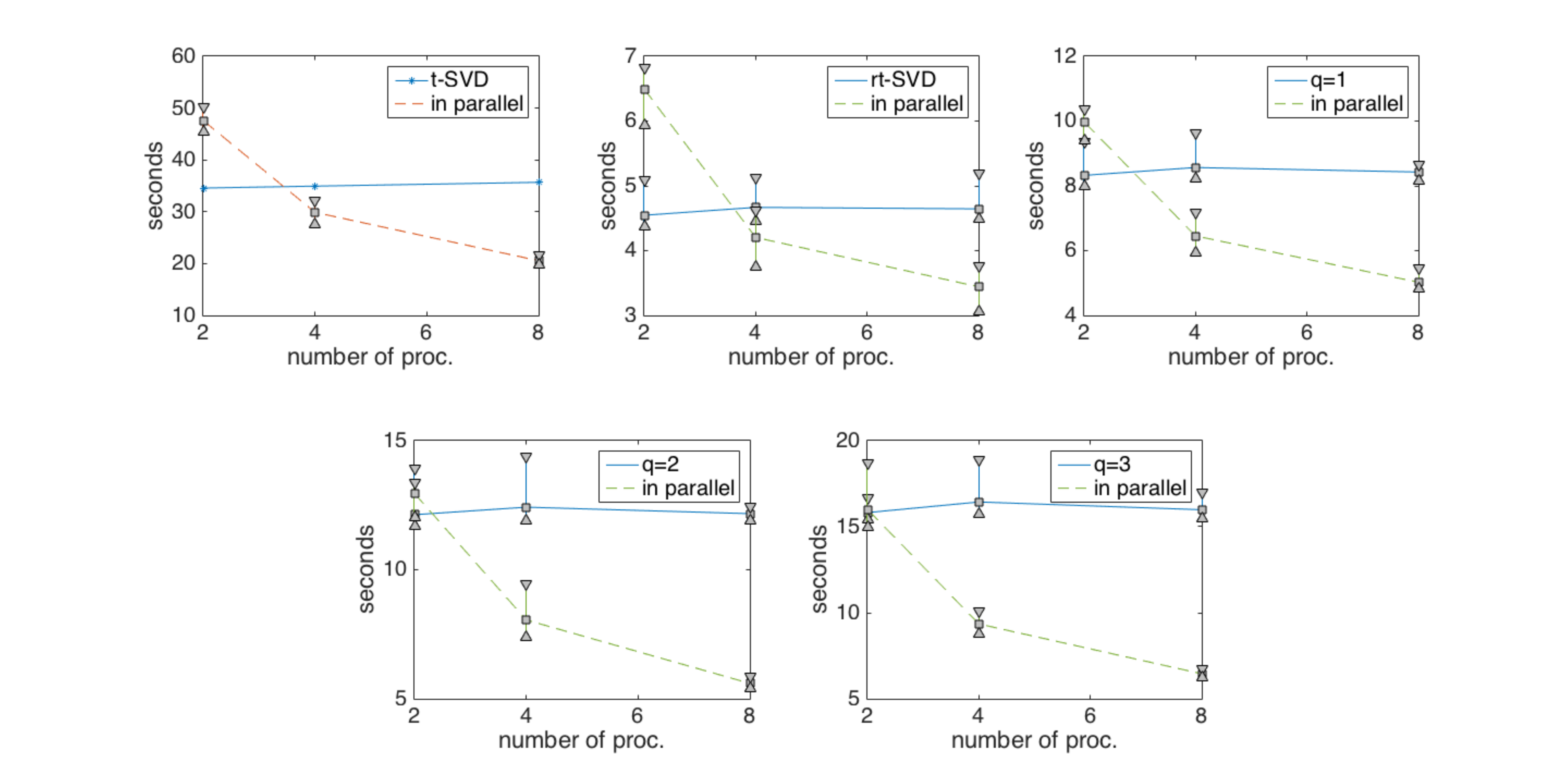}
\end{center}
\caption{The computation time of t-SVD, rt-SVD, and rt-SVD with subspace iterations ($q=1-3$), with and without, parallel computing.}\label{fig_cluster}
\end{figure}

\section{Conclusion and Future Work}  \label{conclusion}

In this paper, we discussed the advantages and limitations of the matrix version of randomized algorithms and deterministic tensor-based algorithms. The algorithms we design combines the advantages of randomization for dimensionality reduction, and applied it to tensor decompositions based on the t-product. We extend the randomized SVD method to third order tensors and provide a basic version algorithm (rt-SVD), as well as a more general version algorithm (rt-SVD with subspace iterations). We showed, theoretically, that the expected errors of both rt-SVD and rt-SVD with subspace iterations, are comparable to the best rank-$k$ approximation obtained using the deterministic t-SVD. Furthermore, we provide numerical support by means of application to facial recognition, on two commonly used publicly available datasets.  Moreover, the randomized algorithms proposed here, can be readily parallelized and we demonstrate the benefits of parallelization on the computational time. This makes our algorithm both accurate and efficient, in practice. Our algorithms also have the added benefit that if the application is run on a distributed memory machine, the approximation factorization can be separately stored on different processors, and can be conveniently used later, without additional computation cost. 
 
For future work, there are several potential research directions. First, the authors in~\cite{hao2013facial} proposed a different method for truncated tensor SVD decomposition, called t-SVD II. This method, while deterministic, has good performance when the singular values decay at different rates across the frontal slices. One avenue of future research would be to develop a  randomized version of t-SVD II, the benefits of which maybe higher recognition rate and lower computational cost. In this paper, we have used Gaussian random tensors for dimensionality reduction. The second possible direction is to investigate other possible structured random tensors, based on subsampled Hadamard or Fourier Transform. 
One drawback of the current methods for compression is that, if a new image or set of images is introduced into the database, then we have to recompute the approximate factorization which can be very expensive. It would be interesting to extend   updating or down-dating of matrix factorizations to tensors using the t-product. Finally, the randomized algorithms can be extended to other tensor decompositions based on invertible linear transforms (see~\cite{kernfeld2015tensor} for more details).

\appendix
\section{Randomized subspace iteration}\label{s_proof}

Our goal in this section is to prove Theorem~\ref{t_gu_subspace}. Before we give the proof, we define the following quantities that will be used later.

 Partition $\mat{A}$ conformally as 
\begin{equation}
\mat{A}\> =  \> \begin{bmatrix} \mat{U}_1 & \mat{U}_2 \end{bmatrix} \begin{bmatrix} \mat{\Sigma}_1 & \\ & \mat{\Sigma}_2 \end{bmatrix}  \begin{bmatrix} \mat{V}_1^{\rm H} \\ \mat{V}_2^{\rm H} \end{bmatrix} \,.
\end{equation}
Define $\mat{W}_1 \equiv \mat{V}_1^{\rm H}\mat{W}$ and $\mat{W}_2 \equiv \mat{V}_2^{\rm H}\mat{W}$. The subspace iteration (Algorithm~\ref{alg_rSVD_power_subspace}) with random starting guess, can be alternatively expressed as  \[ \mat{Y} \> =\>  (\mat{AA}^{\rm H})^q\mat{AW} \> = \> \mat{U} \begin{bmatrix} \mat{\Sigma}_1^{2q+1} & \\ & \mat{\Sigma}_2^{2q+1} \end{bmatrix} \begin{bmatrix} \mat{W}_1 \\ \mat{W}_2 \end{bmatrix}.\]

Suppose $\mat{V}$ has full column rank; we denote the projection matrix by $\proj{V} \equiv \mat{V}(\mat{V}^{\rm H}\mat{V})^{-1}\mat{V}^{\rm H}$ corresponding to $\range(V)$. Additionally, because of $\mat{V}$ has orthonormal columns, then the expression for the projector simplifies to $\proj{V} = \mat{VV}^{\rm H}$.

We present a result that characterizes the error of the low-rank matrix approximation. The result makes minimal assumptions about the sampling matrix $\mat{W}$, and is therefore, applicable to any distribution. This result will be used to prove Theorem~\ref{t_gu_subspace}. 
\begin{theorem}\label{t_struct}
Let $\mat{A}$ be an $m\times n$ matrix with SVD $\mat{A} = \mat{U\Sigma V}^{\rm H}$ and  $k\geq 0$ be a fixed parameter. Choose a matrix $\mat{W} \in \mathbb{C}^{n\times \ell}$, define $\mat{W}_1$ and $\mat{W}_2$ as above and assume that $\mat{W}_1$ has full row rank. Compute $\mat{Q}$ using Algorithm~\ref{t_gu_subspace}. Then the approximation error satisfies
\begin{equation}
\normf{(\mat{I}-\mat{QQ}^{\rm H})\mat{A}}^2 \> \leq \> \normf{\mat{\Sigma}_2}^2 + \tau_k^{4q}\normf{\mat{\Sigma}_2\mat{W}_2\mat{W}_1^\dagger}^2.
\end{equation}
\end{theorem}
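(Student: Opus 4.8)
The plan is to reduce the problem to a diagonal one and then combine the optimality of orthogonal projection with a carefully chosen test matrix, in the spirit of the deterministic part of the Halko--Martinsson--Tropp framework. First I would observe that the QR steps in Algorithm~\ref{alg_rSVD_power_subspace} do not change ranges, so $\mat{Q}\mat{Q}^{\rm H}$ is exactly the orthogonal projector $\proj{Y}$ onto $\range(\mat{Y})$, where $\mat{Y} = (\mat{AA}^{\rm H})^q\mat{AW} = \mat{U}\,\mathrm{diag}(\mat{\Sigma}_1^{2q+1},\mat{\Sigma}_2^{2q+1})\,\mat{F}$ and $\mat{F} = \mat{V}^{\rm H}\mat{W}$ stacks $\mat{W}_1$ over $\mat{W}_2$. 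Using unitary invariance of the Frobenius norm, I would left-multiply by $\mat{U}^{\rm H}$ and absorb $\mat{V}^{\rm H}$ on the right to get
\[
\normf{(\mat{I}-\mat{Q}\mat{Q}^{\rm H})\mat{A}} = \normf{(\mat{I}-\tilde{\mat{P}})\mat{\Sigma}},
\]
where $\tilde{\mat{P}} = \mat{U}^{\rm H}\mat{Q}\mat{Q}^{\rm H}\mat{U}$ is the orthogonal projector onto $\range(\mat{\Sigma}^{2q+1}\mat{F})$. This strips away $\mat{U}$ and $\mat{V}$ and leaves a purely diagonal problem.

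The second step exploits the fact that $\tilde{\mat{P}}\mat{\Sigma}$ is the best columnwise approximation of $\mat{\Sigma}$ from $\range(\mat{M})$, with $\mat{M} = \mat{\Sigma}^{2q+1}\mat{F}$; hence for any coefficient matrix $\mat{C}$,
\[
\normf{(\mat{I}-\tilde{\mat{P}})\mat{\Sigma}}^2 \leq \normf{\mat{\Sigma} - \mat{M}\mat{C}}^2.
\]
The crux is to choose $\mat{C}$ so that the dominant (top) block of the residual cancels. Because $\mat{W}_1$ has full row rank and $\mat{\Sigma}_1$ is invertible, $\mat{\Sigma}_1^{2q+1}\mat{W}_1$ admits the right inverse $\mat{W}_1^\dagger\mat{\Sigma}_1^{-(2q+1)}$, so I would take $\mat{C} = \mat{W}_1^\dagger[\mat{\Sigma}_1^{-2q}\ \ \mat{0}]$. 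A block computation then shows the top $k$ rows of $\mat{\Sigma}-\mat{M}\mat{C}$ vanish exactly, while the bottom block equals $[-\mat{\Sigma}_2^{2q+1}\mat{W}_2\mat{W}_1^\dagger\mat{\Sigma}_1^{-2q}\ \ \mat{\Sigma}_2]$, yielding
\[
\normf{(\mat{I}-\tilde{\mat{P}})\mat{\Sigma}}^2 \leq \normf{\mat{\Sigma}_2}^2 + \normf{\mat{\Sigma}_2^{2q+1}\mat{W}_2\mat{W}_1^\dagger\mat{\Sigma}_1^{-2q}}^2.
\]

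Finally I would extract the gap factor. Writing $\mat{G} = \mat{\Sigma}_2\mat{W}_2\mat{W}_1^\dagger$, I have $\mat{\Sigma}_2^{2q+1}\mat{W}_2\mat{W}_1^\dagger\mat{\Sigma}_1^{-2q} = \mat{\Sigma}_2^{2q}\mat{G}\mat{\Sigma}_1^{-2q}$, and the diagonal factors scale the $(j,i)$ entry of $\mat{G}$ by $(\sigma_{k+j}/\sigma_i)^{2q}$. Since $\sigma_{k+j}\leq\sigma_{k+1}$ and $\sigma_i\geq\sigma_k$ for $i\leq k$, every such ratio is at most $\tau_k$, so summing squared entries gives $\normf{\mat{\Sigma}_2^{2q}\mat{G}\mat{\Sigma}_1^{-2q}}^2 \leq \tau_k^{4q}\normf{\mat{G}}^2$, and combining the three steps produces the claim. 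The main obstacle is the second step: identifying the $\mat{C}$ that annihilates the dominant block is precisely what makes the bound sharp, and I would need to verify that $\mat{W}_1$ having full row rank (not invertibility) already suffices to form the relevant right inverse. The reduction in the first step and the entrywise estimate in the last step are then routine.
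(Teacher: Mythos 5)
Your proof is correct, and its endpoints coincide with the paper's own: the unitary-invariance reduction to $\normf{(\mat{I}-\tilde{\mat{P}})\mat{\Sigma}}$ and the final gap-factor extraction are the same (the paper bounds $\normf{\mat{F}\mat{\Sigma}_1}$ via submultiplicativity, $\|\mat{\Sigma}_2^{2q}\|_2\,\|\mat{\Sigma}_1^{-2q}\|_2 \leq \tau_k^{2q}$, which is equivalent to your entrywise estimate). Where you genuinely diverge is the middle step. The paper follows the Halko--Martinsson--Tropp machinery: it forms $\mat{Z} = \mat{U}^{\rm H}\mat{Y}\mat{W}_1^\dagger\mat{\Sigma}_1^{-(2q+1)}$, passes from $\proj{Q}$ to $\proj{Z}$ using \cite[Propositions 8.4 and 8.5]{2011halko}, and then uses the explicit block formula for $(\mat{I}-\proj{Z})\mat{\Sigma}$ from \cite[Theorem 9.1]{2011halko}, involving $(\mat{I}+\mat{F}^{\rm H}\mat{F})^{-1}$ and a perturbation inequality, to reach $\normf{\mat{\Sigma}_2}^2 + \normf{\mat{F\Sigma}_1}^2$. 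You reach the same intermediate bound by the variational (least-squares) property of orthogonal projection tested against $\mat{M}\mat{C}$ with $\mat{C} = \mat{W}_1^\dagger\begin{bmatrix}\mat{\Sigma}_1^{-2q} & \mat{0}\end{bmatrix}$; note that $\mat{M}\mat{C} = \mat{Z}\begin{bmatrix}\mat{\Sigma}_1 & \mat{0}\end{bmatrix}$, so your test matrix lives in exactly the trial subspace the paper projects onto. Your route is more elementary and self-contained: no explicit projector formula, no $(\mat{I}+\mat{F}^{\rm H}\mat{F})^{-1}$ computation, and full row rank of $\mat{W}_1$ enters only through the right-inverse identity $\mat{W}_1\mat{W}_1^\dagger = \mat{I}$, exactly as you anticipated. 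What the paper's route buys is brevity, by citing the HMT results as black boxes. One precision point in your favor once corrected: you assert that $\tilde{\mat{P}}$ is the projector onto $\range(\mat{\Sigma}^{2q+1}\mat{F})$ exactly (``QR does not change ranges''), which requires the intermediate matrices to have full column rank; in general the QR steps can only enlarge the range, so what holds unconditionally is the inclusion $\range(\mat{\Sigma}^{2q+1}\mat{F}) \subseteq \range(\mat{U}^{\rm H}\mat{Q})$ --- and since your inequality only needs $\mat{M}\mat{C}$ to lie in the range of the projector, the inclusion suffices and your argument stands without any rank assumption on $\mat{Y}$.
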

\begin{proof}
Define matrices $\mat{Z}$ and $\mat{F}$ as 
\[ \mat{Z} \> =\> \mat{U}^{\rm H} \mat{Y} \mat{W}_1^\dagger \mat{\Sigma}_1^{-(2q+1)} = \begin{bmatrix} \mat{I} \\ \mat{F} \end{bmatrix} \qquad \mat{F} \equiv \mat{\Sigma}_2^{2q+1} \mat{W}_2\mat{W}_1^\dagger\mat{\Sigma}_1^{-(2q+1)} .  \]
The construction of $\mat{Z}$ ensures that the following results hold 
\begin{equation}\label{e_range}
 \range(\mat{Z}) \subset \range(\mat{Y}) = \range(\mat{U}^{\rm H} \mat{Y}) = \range(\mat{U}^{\rm H}\mat{Q}).
\end{equation}
 Plugging the SVD of $\mat{A}$ into the error in the low-rank approximation
\[\normf{(\mat{I}-\mat{QQ}^{\rm H})\mat{A}}^2 \> = \> \normf{(\mat{I}-\proj{Q})\mat{U\Sigma V}^{\rm H}}^2 =  \normf{(\mat{I}-\proj{Q})\mat{U\Sigma}}^2,\]
where the last step follows since the Frobenius norm is unitarily invariant. Recall that~\cite[Proposition 8.4]{2011halko} implies that $\mat{U}^{\rm H}\proj{Q}\mat{U} = \proj{\mat{U}^{\rm H}\mat{Q}}$, and from~\cite[Proposition 8.5]{2011halko} and~\eqref{e_range} follows 
\[ \normf{(\mat{I}-\proj{Q})\mat{U\Sigma}}^2 = \normf{\mat{\Sigma}^{\rm H} (\mat{I}-\proj{U^{\rm H} Q})\mat{\Sigma}} \leq \normf{\mat{\Sigma}^{\rm H} (\mat{I}-\proj{Z})\mat{\Sigma}} = \normf{(\mat{I}-\proj{Z})\mat{\Sigma}}^2.\] 
Following the steps of~\cite[Theorem 9.1]{2011halko} it can be shown that 
\[ (\mat{I}-\proj{Z})\mat{\Sigma}  = \begin{bmatrix}  (\mat{I}+\mat{F}^{\rm H}\mat{F})^{-1}\mat{F}^{\rm H}\mat{F}\mat{\Sigma}_1 \\ \left( \mat{I}-\mat{F}(\mat{I}+\mat{F}^{\rm H}\mat{F})^{-1}\mat{F}^{\rm H}\right)\mat{\Sigma}_2\end{bmatrix}. \]
We also recall from~\cite[Theorem 9.1]{2011halko} the following inequality
\begin{equation}\label{e_pert_ineq}
  \mat{I}-\mat{F}(\mat{I}+\mat{F}^{\rm H}\mat{F})^{-1}\mat{F}^{\rm H} \> \preceq \> \mat{I}. 
\end{equation}
We can then bound
\begin{equation}\label{e_projz} 
\normf{(\mat{I}-\proj{Z})\mat{\Sigma}}^2 \> = \> \normf{(\mat{I}+\mat{F}^{\rm H}\mat{F})^{-1}\mat{F}^{\rm H}\mat{F}\mat{\Sigma}_1}^2 + \normf{ \left( \mat{I}-\mat{F}(\mat{I}+\mat{F}^{\rm H}\mat{F})^{-1}\mat{F}^{\rm H}\right)\mat{\Sigma}_2}^2 \\
\end{equation}
where the last step follows from~\eqref{e_pert_ineq} and~\cite[Proposition 8.4]{2011halko}. We can bound
\[   \normf{(\mat{I}+\mat{F}^\top\mat{F})^{-1}\mat{F}^{\rm H}\mat{F}\mat{\Sigma}_1}\> \leq \> \| \mat{F}(\mat{I}+\mat{F}^{\rm H}\mat{F})^{-1}\|_2 \normf{\mat{F\Sigma}_1} \> \leq \> \normf{\mat{F\Sigma}_1}, \]
where the second result follows from the sub-multiplicativity of the Frobenius norm, and the last result follows from a simple SVD argument. Together with~\eqref{e_projz} this gives  
\begin{equation}
\label{e_projz2}
 \normf{(\mat{I}-\proj{Q})\mat{A}}^2 \> \leq  \>\normf{(\mat{I}-\proj{Z})\mat{\Sigma}}^2 \> \leq  \> \normf{\mat{\Sigma}_2}^2 + \normf{\mat{F\Sigma}_1}^2. 
\end{equation}
Observe that $\mat{F\Sigma}_1 = \mat{\Sigma}_2^{2q} (\mat{\Sigma}_2\mat{W}_2\mat{W}_1^\dagger) \mat{\Sigma}_1^{-2q}$, and together with some simple norm inequalities, it follows that 
\[ \normf{\mat{F\Sigma}_1} \> \leq\> \| \mat{\Sigma}_2^{2q}\|_2 \| \mat{\Sigma}_1^{-2q}\|_2 \normf{\mat{\Sigma}_2\mat{W}_2\mat{W}_1^\dagger} \> \leq \>\tau_k^{2q}\normf{\mat{\Sigma}_2\mat{W}_2\mat{W}_1^\dagger}. \]
Combining this result with~\eqref{e_projz2} gives the desired result.
\end{proof}

We are now ready to prove Theorem~\ref{t_gu_subspace}.

\textit{Proof of Theorem~\ref{t_gu_subspace}}
Apply the same arguments as~\cite[Theorem 10.5]{2011halko} to Theorem~\ref{t_struct}. 
$\square$

\begin{remark}\label{r_gucomp}
The estimate in Theorem~\ref{t_gu_subspace} is sharper than~\cite[Theorem 5.7]{gu2015subspace} when $\ell = k + p$ and $p \geq 2$. This implies that the singular value gap is chosen to be between $k$ and $k+1$. To see this, we can simplify the bound in Theorem~\ref{t_gu_subspace} to
\[ \mathbb{E}\normf{ \mathbf{A}-\mathbf{Q}\mathbf{Q}^{\rm H}  \mathbf{A}} \> \leq
\> \sqrt{\sum_{j > k} \sigma_j^2 + \frac{k\tau_k^{4q}(n-k)}{p-1}\sigma_{k+1}^2}. \]
The equivalent bound from~\cite[Theorem 5.7]{gu2015subspace} is 
\[ \mathbb{E}\normf{ \mathbf{A}-\mathbf{Q}\mathbf{Q}^{\rm H}  \mathbf{A}} \> \leq
\> \sqrt{\sum_{j > k} \sigma_j^2 + {k\tau_k^{4q}(n-k)}C^2\sigma_{k+1}^2}, \]
where $C=(\sqrt{n-k}+\sqrt{k+p}+7)(\frac{4e\sqrt{k+p}}{p+1})$. Since the first terms in both expressions are the same, comparing the second terms we have 
\[ \frac{(\sqrt{n-k}+\sqrt{k+p}+7)^2}{n-k}\left(\frac{16e^2(k+p)}{(p+1)^2}\right) (p-1)  \> > \> 1.\]
This shows that our bound is tighter under these conditions. For any other choice of $p$ and $\ell$,~\cite[Theorem 5.7]{gu2015subspace} maybe sharper depending on the singular value decay.    
\end{remark}

\section{Proofs of Section~\ref{rt-svd}}\label{app_tensor}

We present the following Lemma, which is similar to Parseval's theorem. 
\begin{lemma}\label{l_parseval}
Let $\mc{A}$ be a real $n_1\times n_2 \times n_3$ tensor. Then 
\[ \normf{\mc{A}}^2  \> = \> \frac{1}{n_3} \sum_{i=1}^{n_3}\normf{\hat{\mc{A}}^{(i)}}^2,\]
 where $ \hat{\mc{A}}^{(i)} \equiv \hat{\mc{A}}(:,:,i)$ is the $i^{th}$ frontal slice of the tensor in the Fourier domain.
\end{lemma}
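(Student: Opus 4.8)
The plan is to reduce the statement to the classical Parseval identity for the discrete Fourier transform (DFT) applied fiberwise along the third mode. First I would recall that the operation $\hat{\mc{A}} = \tt{fft}(\mc{A},[\,],3)$ amounts to left-multiplying each tube fiber $\mc{A}(i,j,:) \in \mb{R}^{n_3}$ by the (unnormalized) $n_3 \times n_3$ DFT matrix $\mat{F}$, whose $(s,t)$ entry is $\omega^{(s-1)(t-1)}$ with $\omega = e^{-2\pi \mathrm{i}/n_3}$. The single fact I need about $\mat{F}$ is that $\mat{F}^{\rm H}\mat{F} = n_3 \mat{I}$, i.e. $\mat{F}/\sqrt{n_3}$ is unitary; this is the standard orthogonality relation for roots of unity.

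Given this, for any fiber $\mathbf{x} \equiv \mc{A}(i,j,:)$ I would write $\widehat{\mathbf{x}} = \mat{F}\mathbf{x}$ and compute $\|\widehat{\mathbf{x}}\|_2^2 = \widehat{\mathbf{x}}^{\rm H}\widehat{\mathbf{x}} = \mathbf{x}^{\rm H}\mat{F}^{\rm H}\mat{F}\mathbf{x} = n_3\|\mathbf{x}\|_2^2$, which is exactly the fiberwise Parseval relation $\|\mc{A}(i,j,:)\|_2^2 = \frac{1}{n_3}\|\hat{\mc{A}}(i,j,:)\|_2^2$ for each index pair $(i,j)$.

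To finish, I would expand the tensor Frobenius norm as a double sum over the $n_1 n_2$ tube fibers, $\normf{\mc{A}}^2 = \sum_{i=1}^{n_1}\sum_{j=1}^{n_2}\|\mc{A}(i,j,:)\|_2^2$, substitute the fiberwise Parseval relation, and then interchange the order of summation so that the inner sum over $(i,j)$ reconstitutes the Frobenius norm of each frontal slice in the Fourier domain. Regrouping gives
\[ \normf{\mc{A}}^2 = \frac{1}{n_3}\sum_{i,j}\sum_{k=1}^{n_3}|\hat{\mc{A}}(i,j,k)|^2 = \frac{1}{n_3}\sum_{k=1}^{n_3}\normf{\hat{\mc{A}}^{(k)}}^2, \]
which is the claim.

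There is no serious obstacle here; the only points requiring care are bookkeeping ones. I must use the unnormalized DFT convention consistent with $\tt{fft}$ so that the factor $1/n_3$ (rather than $1/\sqrt{n_3}$ or $1$) appears, and I must remember that although $\mc{A}$ is real, $\hat{\mc{A}}$ is generally complex, so the Frobenius norm on the right must be interpreted with complex modulus $|\cdot|^2 = (\cdot)^{\rm H}(\cdot)$; this is precisely what makes the quadratic-form manipulation $\mathbf{x}^{\rm H}\mat{F}^{\rm H}\mat{F}\mathbf{x}$ legitimate.
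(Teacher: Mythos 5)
Your proof is correct, but it takes a genuinely different route from the paper's. You argue directly at the level of tube fibers: Matlab's \texttt{fft} applies the unnormalized DFT matrix $\mat{F}$ to each fiber $\mc{A}(i,j,:)$, the orthogonality relation $\mat{F}^{\rm H}\mat{F} = n_3\mat{I}$ gives the fiberwise Parseval identity $\|\hat{\mc{A}}(i,j,:)\|_2^2 = n_3\|\mc{A}(i,j,:)\|_2^2$, and a rearrangement of the finite sum over entries regroups fibers into frontal slices. The paper instead embeds $\mc{A}$ into the block-circulant matrix $\text{circ}\left(\text{MatVec}(\mc{A})\right)$ and invokes the Kilmer--Martin identity that conjugation by $\mat{F}_{n_3}\otimes\mat{I}$ block-diagonalizes this matrix with diagonal blocks $\hat{\mc{A}}^{(i)}$; unitary invariance of the Frobenius norm then yields the claim, once one accounts for each frontal slice appearing $n_3$ times in the circulant embedding. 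Your argument is more elementary and self-contained, needing nothing beyond roots-of-unity orthogonality, and it handles the $1/n_3$ normalization cleanly (indeed, the paper's own displayed step $\normf{\mc{A}}^2 = n_3\sum_{i}\normf{\mc{A}^{(i)}}^2$ has the factor $n_3$ on the wrong side, a slip your bookkeeping avoids). What the paper's route buys is integration with the algebraic machinery underlying the t-product: the same block-diagonalization identity is what makes all the Fourier-domain algorithms in the paper work, so the lemma comes essentially for free from a cited structural fact rather than a fresh computation.
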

\begin{proof}
Define the operation $\text{circ}\left(\text{MatVec}(\mc{A})\right)$ as 
\[ \text{circ}\left(\text{MatVec}(\mc{A})\right)= \begin{bmatrix} \mc{A}^{(1)} & \mc{A}^{(n_3)} & \dots & \mc{A}^{(2)} \\ \mc{A}^{(2)} & \mc{A}^{(1)} & \dots & \mc{A}^{(3)} \\ \vdots & \vdots & \ddots & \vdots \\ \mc{A}^{(n_3)} & \mc{A}^{(n_3-1)} & \dots & \mc{A}^{(1)} \end{bmatrix}.\]
Let $\mat{F}_{n_3}$ be the Discrete Fourier Matrix. Then~\cite[Equation 3.1]{2011kilmer} implies that 
\[ (\mat{F}_{n_3} \otimes \mat{I}_{n_1}) \text{circ}\left(\text{MatVec}(\mc{A})\right) (\mat{F}_{n_3}^{\rm H} \otimes \mat{I}_{n_2}) = \begin{bmatrix} \hat{\mc{A}}^{(1)} & \\ & \hat{\mc{A}}^{(2)} & \\ & & \ddots & \\ &&&\hat{\mc{A}}^{(n_3)}\end{bmatrix}.  \]

The unitary invariance of the Frobenius norm implies 
\[ \normf{(\mat{F}_{n_3} \otimes \mat{I}_{n_1}) \text{circ}\left(\text{MatVec}(\mc{A})\right) (\mat{F}_{n_3}^{\rm H} \otimes \mat{I}_{n_2})}^2 = \sum_{i=1}^{n_3}\normf{\hat{\mc{A}}^{(i)}}^2. \]
Alternatively, from its definition it follows that 
\[ \normf{\mc{A}}^2 = n_3\sum_{i=1}^{n_3}\normf{\mc{A}^{(i)}}^2 = \normf{(\mat{F}_{n_3} \otimes \mat{I}_{n_1}) \text{circ}\left(\text{MatVec}(\mc{A})\right) (\mat{F}_{n_3}^{\rm H} \otimes \mat{I}_{n_2})}^2  .\]
Equating appropriate terms gives us the desired result.
\end{proof}

We now use this result to prove Theorems~\ref{expectation} and~\ref{expectation_subspace}.

\textit{Proof of Theorem~\ref{expectation}}
Using Lemma~\ref{l_parseval} and the linearity of the expectation, we can write 
\begin{equation}\label{proof2}
	\mathbb{E}\normf{ \mathcal{A}-\mathcal{Q}*\mathcal{Q}^{\rm T} \ast \mathcal{A}}^2 \> \leq
	 \frac{1}{n_3}\left( \sum_{i=1}^{n_3} \mathbb{E}\, \normf{ \hat{\mathcal{A}}^{(i)} - \hat{\mathcal{Q}}^{(i)}(\hat{\mathcal{Q}}^{(i)})^{\rm H}\hat{\mathcal{A}}^{(i)}}^2  \right).
\end{equation}
We can bound the individual terms in the summation by applying the result from~\eqref{expected_error}, 
\begin{equation}\label{proof3}
	\mathbb{E}\normf{ \hat{\mathcal{A}}^{(i)} - \hat{\mathcal{Q}}^{(i)}(\hat{\mathcal{Q}}^{(i)})^\top\hat{\mathcal{A}}^{(i)} }^2 \leq \left({1+\dfrac{k}{p-1}} \right)\,\left(\sum_{j>k} (\hat{\sigma}^{(i)}_{j})^{2}\right).
\end{equation}
Substitute inequality~\eqref{proof3} into inequality~\eqref{proof2} to obtain
\[	\mathbb{E}\normf{ \mathcal{A}-\mathcal{Q}*\mathcal{Q}^{\rm T} }^2 \> \leq \>  \left({1+\dfrac{k}{p-1}} \right)\,\left(\frac{1}{n_3}\sum_{i=1}^{n_3}\sum_{j>k} (\hat{\sigma}^{(i)}_{j})^{2}\right). \]
Finally, using H\"{o}lder's inequality
 \[ 	\mathbb{E}\normf{ \mathcal{A}-\mathcal{Q}*\mathcal{Q}^{\rm T} \ast \mathcal{A}} \leq  \left(	\mathbb{E}\normf{ \mathcal{A}-\mathcal{Q}*\mathcal{Q}^{\rm T}\ast \mathcal{A} }^2 \right)^{1/2},\]
from which the desired result follows.
$\square$

\textit{Proof of Theorem~\ref{expectation_subspace}}
Apply the results of Theorem~\ref{t_gu_subspace} to each frontal slice in~\eqref{proof2} and follow the remaining steps of Theorem~\ref{expectation}. 
$\square$

We next prove Theorem~\ref{concentration}. We will need a result from~\cite{gu2015subspace}.
\begin{lemma}\cite[Theorem 5.8]{gu2015subspace}\label{l_conc}
Let $\mat{W}_1$ and $\mat{W}_2$ be defined as in Appendix~\ref{s_proof}. Let $0 < \delta < 1$ be the failure probability and define the constant
\[ C_\delta = \frac{e\sqrt{k+p}}{p+1} \left(\frac{2}{\delta}\right)^{\frac{1}{p+1}} \left(\sqrt{n-k} + \sqrt{k+p} + \sqrt{2\log \frac{2}{\delta}}\right).\]
Then $ \text{Prob}\left\{\|\mat{W}_2 \|_2 \| \mat{W}_1^\dagger\|_2 \geq C_\delta \right\} \> \leq \> \delta. $
\end{lemma}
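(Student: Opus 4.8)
The plan is to exploit the distributional structure of $\mat{W}_1$ and $\mat{W}_2$ and then bound the two factors $\|\mat{W}_2\|_2$ and $\|\mat{W}_1^\dagger\|_2$ separately, combining the two tail estimates with a union bound. The starting observation is that, since $\mat{V} = [\,\mat{V}_1 \ \mat{V}_2\,]$ has orthonormal columns, multiplication by $\mat{V}_1^{\rm H}$ and $\mat{V}_2^{\rm H}$ sends the Gaussian matrix $\mat{W}$ to two Gaussian matrices $\mat{W}_1 \in \mb{C}^{k\times(k+p)}$ and $\mat{W}_2\in\mb{C}^{(n-k)\times(k+p)}$; because $\range(\mat{V}_1)$ and $\range(\mat{V}_2)$ are orthogonal, these two matrices are moreover statistically independent. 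First I would record these two facts, since they reduce the problem to two independent pieces and let me invoke classical Gaussian-matrix estimates on each.

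For the factor $\|\mat{W}_2\|_2$ I would use the concentration of the largest singular value of a Gaussian matrix. Since $\mat{G}\mapsto\|\mat{G}\|_2$ is $1$-Lipschitz in the Frobenius norm and $\mb{E}\|\mat{W}_2\|_2 \le \sqrt{n-k}+\sqrt{k+p}$, Gaussian concentration of measure gives
\[ \text{Prob}\left\{\|\mat{W}_2\|_2 \ge \sqrt{n-k}+\sqrt{k+p}+t\right\} \> \le \> e^{-t^2/2}. \]
Choosing $t = \sqrt{2\log(2/\delta)}$ makes the right-hand side equal to $\delta/2$, which produces exactly the third parenthetical factor of $C_\delta$ with failure probability $\delta/2$.

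For the factor $\|\mat{W}_1^\dagger\|_2 = 1/\sigma_{\min}(\mat{W}_1)$ I would invoke the tail bound for the spectral norm of the pseudoinverse of a Gaussian matrix (the analogue of~\cite[Prop.~10.4]{2011halko}), namely that for a $k\times(k+p)$ Gaussian matrix with $p\ge 2$ and every $s\ge 1$,
\[ \text{Prob}\left\{\|\mat{W}_1^\dagger\|_2 \ge \frac{e\sqrt{k+p}}{p+1}\, s\right\} \> \le \> s^{-(p+1)}. \]
Setting $s = (2/\delta)^{1/(p+1)}\ge 1$ makes the right-hand side equal to $\delta/2$ and yields precisely the prefactor $\frac{e\sqrt{k+p}}{p+1}(2/\delta)^{1/(p+1)}$ of $C_\delta$. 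Finally, on the complement of the two events above, which by the union bound has probability at least $1-\delta$, the product $\|\mat{W}_2\|_2\|\mat{W}_1^\dagger\|_2$ is bounded by the product of the two individual bounds, which is exactly $C_\delta$; equivalently the product exceeds $C_\delta$ with probability at most $\delta$.

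The main obstacle is the pseudoinverse bound rather than the spectral-norm bound. The smallest singular value of a rectangular Gaussian matrix does not enjoy sub-Gaussian concentration; its reciprocal has only polynomially decaying tails, and establishing the $s^{-(p+1)}$ rate requires controlling the negative moments of the Wishart-type matrix $\mat{W}_1\mat{W}_1^{\rm H}$, which are finite precisely because of the oversampling assumption $p\ge 2$. This is where the constant $\tfrac{e\sqrt{k+p}}{p+1}$ and the exponent $1/(p+1)$ originate, and it is the technical heart of the argument; the remaining steps are routine once this estimate is in hand. A minor subtlety to check is that in the Fourier-domain application $\mat{V}_1,\mat{V}_2$ may be complex, so one should confirm that the Gaussian estimates used above continue to hold, with at most unchanged constants, for the complex Gaussian matrices $\mat{W}_1,\mat{W}_2$ that then arise.
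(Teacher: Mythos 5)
Your proof is correct and is essentially the argument behind this result as it stands in the literature: the paper itself gives no proof of the lemma, importing it verbatim from Gu (2015, Theorem 5.8), whose derivation is precisely your combination of the spectral-norm concentration bound for $\mat{W}_2$ and the pseudoinverse tail bound for $\mat{W}_1$ (cf.\ Propositions 10.3--10.4 of~\cite{2011halko}), each calibrated to fail with probability $\delta/2$ and combined by a union bound (which, as you note implicitly, does not even require the independence of $\mat{W}_1$ and $\mat{W}_2$). The real-versus-complex subtlety you flag at the end is genuine, but it concerns the paper's application of the lemma in the Fourier domain (where $\mat{V}$ may be complex while $\mat{W}$ is real, so $\mat{V}_1^{\rm H}\mat{W}$ is no longer a standard Gaussian matrix) rather than your argument, which is complete in the real Gaussian setting covered by the cited theorem.
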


\textit{Proof of Theorem~\ref{concentration}}
Apply Lemma~\ref{l_parseval} and Theorem~\ref{t_struct} to obtain
\[ \normf{ \mathcal{A}-\mathcal{Q}*\mathcal{Q}^{\rm T} \ast \mathcal{A}}^2 \> \leq \frac{1}{n_3}\sum_{i=1}^{n_3}\left(\normf{\hat{\mc{S}}_2^{(i)}}^2 + (\tau_k^{(i)})^{4q} \normf{\hat{\mc{S}}_2^{(i)}\hat{\mc{W}}_2^{(i)}(\hat{\mc{W}}_1^{(i))})^\dagger}^2\right) .
\]
The sub-multiplicative property of the Frobenius norm implies $$\normf{\hat{\mc{S}}_2^{(i)}\hat{\mc{W}}_2^{(i)}(\hat{\mc{W}}_1^{(i))})^\dagger} \leq \left\|\hat{\mc{W}}_2^{(i)}(\hat{\mc{W}}_1^{(i))})^\dagger \right\|_2\normf{\hat{\mc{S}}_2^{(i)}}.$$ Plugging this into the above equation gives us 

\[ \normf{ \mathcal{A}-\mathcal{Q}*\mathcal{Q}^{\rm T} \ast \mathcal{A}}^2 \> \leq \> \frac{1}{n_3}\sum_{i=1}^{n_3}\left(1 + (\tau_k^{(i)})^{4q} \left\|\hat{\mc{W}}_2^{(i)}(\hat{\mc{W}}_1^{(i))})^\dagger\right\|^2\right)\left(\sum_{j>k} (\hat{\sigma}^{(i)}_{j})^{2}\right).
\]
Since Gaussian random matrices are invariant to rotations, $\mc{W}_2^{(i)}$ and $\mc{W}_1^{(i)}$ will be independent for each $i=1,\dots,n_3$. We can therefore apply Lemma~\ref{l_conc} to obtain the desired result.
$\square$

\bibliographystyle{plain}	


\bibliography{bibfile}		

\begin{thebibliography}{10}

\bibitem{CroppedYaleB}
The extended {Yale} {Face} {Dataset} {B}.
\newblock \url{http://vision.ucsd.edu/~leekc/ExtYaleDatabase/ExtYaleB.html}.

\bibitem{achlioptas2003database}
Dimitris Achlioptas.
\newblock Database-friendly random projections: {J}ohnson--{L}indenstrauss with
  binary coins.
\newblock {\em Journal of computer and System Sciences}, 66(4):671--687, 2003.

\bibitem{ailon2006approximate}
Nir Ailon and Bernard Chazelle.
\newblock Approximate nearest neighbors and the fast {J}ohnson--{L}indenstrauss
  transform.
\newblock In {\em Proceedings of the thirty-eighth annual ACM symposium on
  Theory of computing}, pages 557--563. ACM, 2006.

\bibitem{ailon2009fast}
Nir Ailon and Bernard Chazelle.
\newblock The fast {J}ohnson--{L}indenstrauss transform and approximate nearest
  neighbors.
\newblock {\em SIAM Journal on Computing}, 39(1):302--322, 2009.

\bibitem{berry2007algorithms}
Michael~W Berry, Murray Browne, Amy~N Langville, V~Paul Pauca, and Robert~J
  Plemmons.
\newblock Algorithms and applications for approximate nonnegative matrix
  factorization.
\newblock {\em Computational statistics \& data analysis}, 52(1):155--173,
  2007.

\bibitem{biagioni2015randomized}
David~J Biagioni, Daniel Beylkin, and Gregory Beylkin.
\newblock Randomized interpolative decomposition of separated representations.
\newblock {\em Journal of Computational Physics}, 281:116--134, 2015.

\bibitem{Att}
AT$\&$T~Laboratories Cambridge.
\newblock The database of faces.
\newblock
  \url{http://www.cl.cam.ac.uk/research/dtg/attarchive/facedatabase.html}.

\bibitem{chan1994low}
Tony~F Chan and Per~Christian Hansen.
\newblock Low-rank revealing {QR} factorizations.
\newblock {\em Numerical Linear Algebra with Applications}, 1(1):33--44, 1994.

\bibitem{clarkson2009}
Kenneth~L Clarkson and David~P Woodruff.
\newblock Numerical linear algebra in the streaming model.
\newblock In {\em Proceedings of the forty-first annual ACM symposium on Theory
  of computing}, pages 205--214. ACM, 2009.

\bibitem{dasgupta2003elementary}
Sanjoy Dasgupta and Anupam Gupta.
\newblock An elementary proof of a theorem of {J}ohnson and {L}indenstrauss.
\newblock {\em Random Structures \& Algorithms}, 22(1):60--65, 2003.

\bibitem{de2000multilinear}
Lieven De~Lathauwer, Bart De~Moor, and Joos Vandewalle.
\newblock A multilinear singular value decomposition.
\newblock {\em SIAM journal on Matrix Analysis and Applications},
  21(4):1253--1278, 2000.

\bibitem{dorobantu1998wavelet}
Mihai Dorobantu and Bj{\"o}rn Engquist.
\newblock Wavelet-based numerical homogenization.
\newblock {\em SIAM Journal on Numerical Analysis}, 35(2):540--559, 1998.

\bibitem{drineas2007randomized}
Petros Drineas and Michael~W Mahoney.
\newblock A randomized algorithm for a tensor-based generalization of the
  singular value decomposition.
\newblock {\em Linear algebra and its applications}, 420(2):553--571, 2007.

\bibitem{dunlavy2011multilinear}
Daniel~M Dunlavy, Tamara~G Kolda, and W~Philip Kegelmeyer.
\newblock Multilinear algebra for analyzing data with multiple linkages.
\newblock {\em Graph Algorithms in the Language of Linear Algebra}, pages
  85--114, 2011.

\bibitem{ely20135d}
Gregory Ely, Shuchin Aeron, Ning Hao, Misha~E Kilmer, et~al.
\newblock 5{D} and 4{D} pre-stack seismic data completion using tensor nuclear
  norm ({TNN}).
\newblock {\em preprint}, 2013.

\bibitem{frankl1988johnson}
Peter Frankl and Hiroshi Maehara.
\newblock The {J}ohnson--{L}indenstrauss lemma and the sphericity of some
  graphs.
\newblock {\em Journal of Combinatorial Theory, Series B}, 44(3):355--362,
  1988.

\bibitem{frommer2012numerical}
Andreas Frommer, Thomas Lippert, Bj{\"o}rn Medeke, and Klaus Schilling.
\newblock {\em Numerical Challenges in Lattice Quantum Chromodynamics: Joint
  Interdisciplinary Workshop of John Von Neumann Institute for Computing,
  J{\"u}lich, and Institute of Applied Computer Science, Wuppertal University,
  August 1999}, volume~15.
\newblock Springer Science \& Business Media, 2012.

\bibitem{gleich2013power}
David~F Gleich, Chen Greif, and James~M Varah.
\newblock The power and {A}rnoldi methods in an algebra of circulants.
\newblock {\em Numerical Linear Algebra with Applications}, 20(5):809--831,
  2013.

\bibitem{gu2015subspace}
Ming Gu.
\newblock Subspace iteration randomization and singular value problems.
\newblock {\em SIAM Journal on Scientific Computing}, 37(3):A1139--A1173, 2015.

\bibitem{halko2011algorithm}
Nathan Halko, Per-Gunnar Martinsson, Yoel Shkolnisky, and Mark Tygert.
\newblock An algorithm for the principal component analysis of large data sets.
\newblock {\em SIAM Journal on Scientific computing}, 33(5):2580--2594, 2011.

\bibitem{2011halko}
Nathan Halko, Per-Gunnar Martinsson, and Joel~A Tropp.
\newblock Finding structure with randomness: Probabilistic algorithms for
  constructing approximate matrix decompositions.
\newblock {\em SIAM review}, 53(2):217--288, 2011.

\bibitem{hansen2006deblurring}
Per~Christian Hansen, James~G Nagy, and Dianne~P O'leary.
\newblock {\em Deblurring images: matrices, spectra, and filtering}, volume~3.
\newblock Siam, 2006.

\bibitem{hao2014nonnegative}
N~Hao, L~Horesh, and ME~Kilmer.
\newblock Nonnegative tensor decomposition.
\newblock In {\em Compressed Sensing \& Sparse Filtering}, pages 123--148.
  Springer, 2014.

\bibitem{hao2013facial}
Ning Hao, Misha~E Kilmer, Karen Braman, and Randy~C Hoover.
\newblock Facial recognition using tensor--tensor decompositions.
\newblock {\em SIAM Journal on Imaging Sciences}, 6(1):437--463, 2013.

\bibitem{hastie2009elements}
Trevor Hastie, Robert Tibshirani, Jerome Friedman, T~Hastie, J~Friedman, and
  R~Tibshirani.
\newblock {\em The elements of statistical learning}, volume~2.
\newblock springer New York, 2009.

\bibitem{hitchcock1927tensor}
F.~L. Hitchcock.
\newblock The expression of a tensor or a polyadic as a sum of products.
\newblock {\em J. Math. Phys}, 6(1):164--189, 1927.

\bibitem{horn2012matrix}
Roger~A Horn and Charles~R Johnson.
\newblock {\em Matrix analysis}.
\newblock Cambridge university press, 2012.

\bibitem{kernfeld2015tensor}
Eric Kernfeld, Misha Kilmer, and Shuchin Aeron.
\newblock Tensor--tensor products with invertible linear transforms.
\newblock {\em Linear Algebra and its Applications}, 485:545--570, 2015.

\bibitem{kilmer2013third}
Misha~E Kilmer, Karen Braman, Ning Hao, and Randy~C Hoover.
\newblock Third-order tensors as operators on matrices: A theoretical and
  computational framework with applications in imaging.
\newblock {\em SIAM Journal on Matrix Analysis and Applications},
  34(1):148--172, 2013.

\bibitem{2011kilmer}
Misha~E Kilmer and Carla~D Martin.
\newblock Factorization strategies for third-order tensors.
\newblock {\em Linear Algebra and its Applications}, 435(3):641--658, 2011.

\bibitem{kolda2009tensor}
Tamara~G Kolda and Brett~W Bader.
\newblock Tensor decompositions and applications.
\newblock {\em SIAM review}, 51(3):455--500, 2009.

\bibitem{liberty2007randomized}
Edo Liberty, Franco Woolfe, Per-Gunnar Martinsson, Vladimir Rokhlin, and Mark
  Tygert.
\newblock Randomized algorithms for the low-rank approximation of matrices.
\newblock {\em Proceedings of the National Academy of Sciences},
  104(51):20167--20172, 2007.

\bibitem{mahoney2011randomized}
Michael~W Mahoney.
\newblock Randomized algorithms for matrices and data.
\newblock {\em Foundations and Trends{\textregistered} in Machine Learning},
  3(2):123--224, 2011.

\bibitem{martin2013order}
Carla~D Martin, Richard Shafer, and Betsy LaRue.
\newblock An order-p tensor factorization with applications in imaging.
\newblock {\em SIAM Journal on Scientific Computing}, 35(1):A474--A490, 2013.

\bibitem{sadek2012svd}
Rowayda~A Sadek.
\newblock {SVD} based image processing applications: State of the {ART},
  contributions and research challenges.
\newblock {\em arXiv preprint arXiv:1211.7102}, 2012.

\bibitem{semerci2013tensor}
Oguz Semerci, Ning Hao, Misha~E Kilmer, and Eric~L Miller.
\newblock Tensor-based formulation and nuclear norm regularization for
  multi-energy computed tomography.
\newblock {\em arXiv preprint arXiv:1307.5348}, 2013.

\bibitem{sigurdson2012randomized}
Ryan Sigurdson and Carmeliza Navasca.
\newblock Randomized tensor--based algorithm for image classification.
\newblock In {\em Signals, Systems and Computers (ASILOMAR), 2012 Conference
  Record of the Forty Sixth Asilomar Conference on}, pages 1984--1988. IEEE,
  2012.

\bibitem{simon2000low}
Horst~D Simon and Hongyuan Zha.
\newblock Low-rank matrix approximation using the lanczos bidiagonalization
  process with applications.
\newblock {\em SIAM Journal on Scientific Computing}, 21(6):2257--2274, 2000.

\bibitem{smilde2005multi}
Age Smilde, Rasmus Bro, and Paul Geladi.
\newblock {\em Multi-way analysis: applications in the chemical sciences}.
\newblock John Wiley \& Sons, 2005.

\bibitem{tenorio2001statistical}
Luis Tenorio.
\newblock Statistical regularization of inverse problems.
\newblock {\em SIAM review}, 43(2):347--366, 2001.

\bibitem{tsourakakis2010mach}
Charalampos~E Tsourakakis.
\newblock {MACH}: Fast randomized tensor decompositions.
\newblock In {\em SDM}, pages 689--700. SIAM, 2010.

\bibitem{tucker1963implications}
Ledyard~R Tucker.
\newblock Implications of factor analysis of three-way matrices for measurement
  of change.
\newblock {\em Problems in measuring change}, 122137, 1963.

\bibitem{van1996schur}
Alle-Jan Van~der Veen.
\newblock A {S}chur method for low-rank matrix approximation.
\newblock {\em SIAM Journal on Matrix Analysis and Applications},
  17(1):139--160, 1996.

\bibitem{vance2013netflix}
Ashlee Vance.
\newblock Netflix, {R}eed {H}astings survive missteps to join {S}ilicon
  {V}alley's elite.
\newblock {\em Bloomberg Businessweek}, 2013.

\bibitem{vasilescu2002multilinear}
M~Alex~O Vasilescu and Demetri Terzopoulos.
\newblock Multilinear analysis of image ensembles: {T}ensorfaces.
\newblock In {\em Computer Vision—ECCV 2002}, pages 447--460. Springer, 2002.

\bibitem{witten2013randomized}
Rafi Witten and Emmanuel Candes.
\newblock Randomized algorithms for low-rank matrix factorizations: sharp
  performance bounds.
\newblock {\em Algorithmica}, pages 1--18, 2013.

\bibitem{woolfe2008fast}
Franco Woolfe, Edo Liberty, Vladimir Rokhlin, and Mark Tygert.
\newblock A fast randomized algorithm for the approximation of matrices.
\newblock {\em Applied and Computational Harmonic Analysis}, 25(3):335--366,
  2008.

\bibitem{zhang2014novel}
Zemin Zhang, Gregory Ely, Shuchin Aeron, Ning Hao, and Misha Kilmer.
\newblock Novel methods for multilinear data completion and de-noising based on
  tensor-{SVD}.
\newblock In {\em Computer Vision and Pattern Recognition (CVPR), 2014 IEEE
  Conference on}, pages 3842--3849. IEEE, 2014.

\end{thebibliography}

\end{document}